\numberwithin{equation}{section}
\newtheorem{Theorem}{Theorem}[section]
\newtheorem{Lemma}[Theorem]{Lemma}
\newtheorem{Definition}[Theorem]{Definition}
\newtheorem{Property}[Theorem]{Property}
\title[Monge-Amp\`ere]{Existence, uniqueness and interior regularity of\\ viscosity solutions for a class of Monge-Amp\`ere type equations}
\author[Mengni Li]{}
\address{Mengni Li, School of Mathematics, Southeast University, Nanjing, Jiangsu, China}
\email{krisymengni@163.com}
\author[You Li]{}
\address{You Li, School of Mathematics and Computational Sciences, Xiangtan University, Hunan, China}
\email{thu3141@126.com}
\subjclass[2010]{}
\keywords{Dirichlet problem, Monge-Amp\`ere type equation, existence, interior regularity}
\begin{document}

\maketitle

\centerline{Mengni Li and You Li\footnote{Corresponding author: You Li}}

\begin{abstract}
The	Monge-Amp\`ere type equations over bounded convex domains arise in a host of geometric applications. In this paper, we focus on the Dirichlet problem for a class of Monge-Amp\`ere type equations, which can be degenerate or singular near the boundary of convex domains. Viscosity subsolutions and viscosity  supersolutions to the problem can be constructed via comparison principle. Finally, we demonstrate the existence, uniqueness and interior regularity (including $W^{2,p}$  with $p\in(1,+\infty)$, $C^{1,\mu}$ with $\mu\in(0,1)$, and $C^\infty$) of the viscosity solution to the problem.

\noindent{\bf \\ Running head: } 
 Monge-Amp\`ere type equation
\noindent{\bf \\ 2020 Mathematics Subject Classification: }35J96, 35D40, 35A01, 35A02, 35B65
\end{abstract}

\section{Introduction}\label{section: introduction}
This paper is devoted to the study of the following Dirichlet problem
\begin{align*}
\det D^2 u&=f(x,u,\nabla u)\text{\hspace{0.2cm} in } \Omega,\stepcounter{equation}\tag{\theequation}\label{equation}\\
u&=0\text{\hspace{0.2cm} on } \partial\Omega,\stepcounter{equation}\tag{\theequation}\label{bcondition}
\end{align*}
where $\Omega\subseteq \mathbb{R}^n(n \geqslant 2)$ is a bounded convex domain, $u:\Omega\to\mathbb{R}$ is a convex function, and $f:\Omega	\times(-\infty,0) \times \mathbb{R}^n\to\mathbb{R}$ satisfies  the following structure conditions:
\begin{enumerate}
\item[$(f_1)$] $f(x,z,q)\in C(\Omega\times (-\infty,0)\times \mathbb{R}^n)$;
\item[$(f_2)$] $\text{for all } 
x\in\Omega, q\in \mathbb{R}^n, f(x,z,q)\text{ is non-decreasing in the second argument }z$;
\item[$(f_3)$] for some constants $A > 0$, $\alpha\in\mathbb{R}$, $\beta\geqslant n+1$, $\gamma<\min\{n+\alpha,\beta-n+1\}$,  there holds
\[0< f(x,z,q)\leqslant
Ad_x^{\beta-n-1}|z|^{-\alpha}(1+|q|^2)^{\frac{\gamma}{2}},\ \ \forall
(x,z,q)\in\Omega	\times(-\infty,0) \times \mathbb{R}^n,\]
where $d_x=\operatorname{dist}(x,\partial\Omega)$.  
\end{enumerate}

Here, 
$(f_1)$ 
is a  regularity assumption, 
$(f_2)$ 
is a monotone assumption, and 
$(f_3)$ is a boundness assumption. 
We observe that $u$, $|\nabla u|$ and $\det D^2u$  are invariant under translation and rotation transformations, which gives that  the equation \eqref{equation} will still satisfy the structure condition $(f_3)$  after  translation and rotation transformations. It is also worth pointing out that the equation \eqref{equation} can  be degenerate or singular near the boundary $\partial\Omega$ based on the structure condition $(f_3)$.  This behavior can be understood in the sense that the right-hand-side function  $f(x,u,\nabla u)$ degenerates near $\partial\Omega$ when $\beta$ is taken sufficiently large, and it may become singular near $\partial\Omega$ when $\alpha$ is sufficiently large as a result of $u=0$ there. 

Throughout the literature, the equation \eqref{equation} is closely related to an abundance of geometric problems. Two large types of the right-hand-side function in \eqref{equation} with representative examples might appear as follows:  when $$f(x,z,q)=g(x)(1+|q|^2)^{\frac{\gamma}{2}},$$  the equation \eqref{equation} includes the prescribed Gauss curvature equation as a special case if taking   $\gamma=n+2$ (see  \cite{Trudinger-Urbas83}); when  $$f(x,z,q)=g(x)|z|^{-\alpha},$$  the equation \eqref{equation} can arise from the $L_p$-Minkowski problem if denoting $\alpha=1-p$  (see \cite{Lutwak,Chen-Li-Zhu}).  Therein lies the critical exponent case of the latter example with  $p=-n-1$, namely the centroaffine Minkowski problem (see \cite{Chou-Wang,Jian-Lu-Zhu}), which is also of great and independent interest. In particular, for the case $f(x,z,q)=|z|^{-(n+2)},$ if $u$ is a solution to the problem \eqref{equation}-\eqref{bcondition}, then $-(1/u)\sum u_{x_ix_j}dx_idx_j$ provides the Hilbert metric in convex domain $\Omega$ (see \cite{Loewner-Nirenberg}), and the Legendre transform of  $u$ defines a complete hyperbolic affine sphere (see \cite{Cheng-Yau77}).  We also refer the readers to \cite{Figalli,Trudinger-Wang08adv}    for more information  and applications.

The study of  Monge-Amp\`ere type equations 
is a long-standing topic. In the past decades,  
 plenty of works have been taken to investigate the existence, uniqueness and interior regularity  of viscosity solutions to the problem \eqref{equation}-\eqref{bcondition} over convex domains.  Let us quote here some representative contributions to motivate this paper. 
 The history goes back at least to the work of  Cheng and Yau \cite{Cheng-Yau77}, where the special case was considered when the right-hand-side $f(x,u,\nabla u)$ is independent of $\nabla u$.  They proved that if $\Omega\subset\mathbb{R}^n$ is a bounded $C^2$ strictly convex domain, $f(x,z)$ is a positive $C^k$ ($k\geqslant 3$) function satisfying $(f_2)$ and 
some structure condition (similar to $(f_3)$), then the problem \eqref{equation}-\eqref{bcondition} admits a unique convex solution $u\in C^{k+1,\delta}(\Omega)$ for any $\delta\in(0,1)$. 
 A bit later,  Caffarelli, Nirenberg and Spruck \cite{Caffarelli-Nirenberg-Spruck} contributed to the general case by showing 
 that if  $\Omega\subset\mathbb{R}^n$ is a bounded $C^\infty$ strictly convex domain, $f(x,z,q)$ is a positive $C^\infty$ function   with $f_z\geqslant 0$, and there exists a globally smooth convex subsolution $\underline{u}\in C^2(\overline{\Omega})$, then the problem \eqref{equation}-\eqref{bcondition} admits a unique strictly convex solution $u\in C^{\infty}(\overline{\Omega})$ with $u\geqslant \underline{u}$. 
 Trudinger and Urbas \cite{Trudinger-Urbas83} subsequently  demonstrated that the problem \eqref{equation}-\eqref{bcondition} is uniquely solvable with convex solution $u\in C^2(\Omega)\cap C^{0,1}(\overline{\Omega})$ under the assumption that $\Omega\subset\mathbb{R}^n$ is a $C^{1,1}$ uniformly convex domain, $f(x,z,q)$ is a positive $C^{1,1}$ function satisfying $f_z\geqslant 0$, $(f_3)$ near $\partial\Omega$ with $\alpha=0$ and  $\gamma\leqslant\beta$, and 
 $ f(x,-N,q)\leqslant g(x)/h(q)$  for all  $x\in
 \Omega$  and $q\in \mathbb{R}^n$, where $N\in\mathbb{R},\ g\in L^1({\Omega}),\ h\in L^1_{\operatorname{loc}}(\mathbb{R}^n)$ are positive with $\int_{\Omega}g<\int_{\mathbb{R}^n}h.$ 
 Considering $\Omega\subset\mathbb{R}^n$ a uniformly convex domain, Urbas  \cite{Urbas88} then extended 
 these results to the following type of equation
 $\det D^2u=g(x,u,Du)/h(Du)$,
 and proved that there is a unique  convex solution $u\in C^2(\Omega)$ if  $g\in C^{1,1}(\Omega\times\mathbb{R}\times\mathbb{R}^n),\ h\in C^{1,1}(\mathbb{R}^n)\cap L^1(\mathbb{R}^n)$ satisfy 
  some structure conditions (see Theorem 4.10 in \cite{Urbas88}). 
 
 The interested readers can also consult \cite{Figalli,Le,Tso,Jiang-Trudinger-Yang,Tian,Liu-Wang,Jian-Wang1,Trudinger-Wang08adv,Hong-Huang-Wang,Guan-Ma,Guanbo} 
and the references therein to find more results and methods on this topic.  
However,  among these studies, we see that the existence, uniqueness and regularity theories on the problem \eqref{equation}-\eqref{bcondition} with general right-hand-side $f(x,u,\nabla u)$ are still far from complete. In fact, 
the principal purpose of this paper is to make a further step in this direction. 

Furthermore, the authors \cite{Li-Li} have recently provided the global H\"older regularity for the problem \eqref{equation}-\eqref{bcondition} under the assumption that there exists a convex solution to this problem.  These regularity  estimates can indeed be viewed as the \textit{a priori} estimates when we prove the existence of viscosity solutions (see Definition \ref{def:vis}). 
Our first main theorem  can be stated as follows: 
\begin{Theorem}[Existence Theorem]\label{thm1}
	Suppose $\Omega\subset\mathbb{R}^n$ is a bounded convex domain, $f(x,z,q)$ satisfies $(f_1)$, $(f_2)$ and $(f_3)$. Then the problem \eqref{equation}-\eqref{bcondition} admits a   viscosity solution over $\Omega$. 
\end{Theorem}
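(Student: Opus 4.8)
The plan is to realize the viscosity solution as a locally uniform limit of classical solutions of regularized problems, with the degeneracy and singularity of $f$ near $\partial\Omega$ kept under control by a priori estimates. Since $\Omega$ is only assumed bounded and convex, I would first exhaust it from inside by a nested family of smooth, uniformly convex domains $\Omega_k$ with $\overline{\Omega_k}\subset\Omega_{k+1}$ and $\bigcup_k\Omega_k=\Omega$, and replace $f$ by functions $f_k\in C^\infty$ that are strictly positive, bounded, and non-decreasing in $z$, obtained from $f$ by mollification together with a cut-off of the singularity at $z=0$ and of the growth as $|q|\to\infty$, arranged so that $f_k\to f$ locally uniformly on $\Omega\times(-\infty,0)\times\mathbb{R}^n$ while $f_k$ still obeys a bound of the form $(f_3)$ with constants independent of $k$. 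Because $f_k$ is bounded and $\Omega_k$ is smooth and uniformly convex, the function solving $\det D^2\psi_k=1+\sup f_k$ in $\Omega_k$ with $\psi_k=0$ on $\partial\Omega_k$ is a globally smooth strict subsolution there, so the classical existence theory for Monge--Amp\`ere equations (\cite{Caffarelli-Nirenberg-Spruck}) gives, for each $k$, a unique convex solution $u_k\in C^\infty(\overline{\Omega_k})$ of $\det D^2u_k=f_k(x,u_k,\nabla u_k)$ in $\Omega_k$ with $u_k=0$ on $\partial\Omega_k$.

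The heart of the matter is to bound the $u_k$ uniformly on compact subsets of $\Omega$. Using the comparison principle and the barriers supplied by the preceding section --- a viscosity subsolution $\underline u$ and a viscosity supersolution $\overline u$ of \eqref{equation}--\eqref{bcondition}, both vanishing on $\partial\Omega$ and strictly negative inside --- together with the way $f_k$ was built, one obtains $\underline u\le u_k\le\overline u$ on $\Omega_k$. Fixing $K\Subset\Omega$, for $k$ large this confines the values of $u_k$ on $K$ to a compact subset of $(-\infty,0)$, while convexity and $u_k|_{\partial\Omega_k}=0$ give a gradient bound $|\nabla u_k|\le C(K)$ on $K$. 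Hence $f_k(x,u_k(x),\nabla u_k(x))=f(x,u_k(x),\nabla u_k(x))$ lies between two positive constants on $K$, and Caffarelli's interior $C^{1,\mu}$ and $W^{2,p}$ estimates give $\|u_k\|_{C^{1,\mu}(K')}\le C(K',K)$ for every $K'\Subset\operatorname{int}K$. These uniform bounds --- and their refinement up to $\partial\Omega$ --- are precisely the a priori estimates referred to in the introduction, resting in the degenerate/singular regime on the global H\"older estimates of \cite{Li-Li}.

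With the uniform bounds in hand, Arzel\`a--Ascoli and a diagonal extraction produce a subsequence of $(u_k)$ converging locally uniformly in $\Omega$ to a convex function $u$ with $\underline u\le u\le\overline u$. The standard stability of viscosity sub- and supersolutions under locally uniform convergence (using $f_k\to f$ locally uniformly and $(f_1)$) then shows that $u$ is a viscosity solution of $\det D^2u=f(x,u,\nabla u)$ in $\Omega$; and since $\underline u$ and $\overline u$ both tend to $0$ at $\partial\Omega$, so does $u$, which therefore attains the boundary data \eqref{bcondition}. This yields a viscosity solution of \eqref{equation}--\eqref{bcondition}, as claimed.

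I expect the main obstacle to be the uniform-estimate step: producing the two-sided barriers $\underline u,\overline u$ and the $k$-independent control of $u_k$ near the degenerate and singular boundary. This is exactly where the structural restrictions $\beta\ge n+1$ and $\gamma<\min\{n+\alpha,\beta-n+1\}$ are needed --- they are what make barriers comparable to suitable powers of $\operatorname{dist}(x,\partial\Omega)$ admissible and what underpin the global H\"older estimates of \cite{Li-Li} --- and one must also check that the regularization $f_k$ can be chosen compatible with these barriers. Once the uniform bounds are secured, the remaining ingredients (regularization, classical solvability, compactness, stability) are routine. An essentially equivalent route is to run Perron's method directly on $\Omega$: the family of viscosity subsolutions lying between $\underline u$ and $\overline u$ is nonempty, its pointwise supremum is again a subsolution, the comparison principle forces it to be a supersolution, and $\underline u,\overline u$ determine the boundary values; the delicate point there is, once more, the construction of $\underline u$ and $\overline u$.
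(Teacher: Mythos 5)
Your proposal takes a genuinely different route from the paper. The paper runs Perron's method directly on $\Omega$ (your closing remark): it builds, for each boundary point, an explicit classical subsolution $W(x)=-M_0x_n^{\lambda_0}(N_0^2l^2-r^2)^{1/2}$, defines $u=\sup_{v\in S}v$ over the family $S$ of viscosity subsolutions with nonpositive boundary values, and then shows $u$ is a viscosity subsolution (by contradiction, lowering a touching test function) and a viscosity supersolution (by the classical bump construction: replacing $u$ by $\max\{u,\underline{\phi_3}\}$ near a bad point produces a strictly larger element of $S$). Your approximation scheme (smooth uniformly convex exhaustion $\Omega_k$, regularized $f_k$, Caffarelli--Nirenberg--Spruck solvability, interior compactness, stability of viscosity solutions) is a legitimate alternative; it trades the paper's delicate bump construction for the technical burden of building the $f_k$ with all required compatibilities (smooth, positive, bounded, nondecreasing in $z$, converging locally uniformly, satisfying a $k$-independent $(f_3)$-type bound, and lying below the barrier's Monge--Amp\`ere measure so that $W$ remains a subsolution for each approximate problem). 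Both approaches hinge on the same barrier $W$ for the boundary behavior, and your interior pinching and gradient bounds are the same ingredients the paper uses in Properties 4.1--4.4.

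There is, however, one concrete gap: you invoke ``a viscosity supersolution $\overline u$ of \eqref{equation}--\eqref{bcondition}, both vanishing on $\partial\Omega$ and strictly negative inside'' as supplied by the preceding section. No such object is constructed there (Section \ref{Sec:W} produces only subsolutions), and under the standing hypotheses it generally cannot be: $(f_3)$ gives only an \emph{upper} bound on $f$, so a convex $\overline u$ with $\overline u|_{\partial\Omega}=0$, $\overline u<0$ inside, and $\det D^2\overline u\leqslant f(x,\overline u,\nabla\overline u)$ would require a positive lower bound on $f$ that is not assumed. Fortunately you do not need $\overline u$: the upper bound $u_k\leqslant 0$ is automatic from convexity and $u_k|_{\partial\Omega_k}=0$, and the uniform \emph{strict} negativity of $u_k$ on a fixed $K\Subset\Omega$ (needed to keep $(x,u_k,\nabla u_k)$ in a compact subset of $\Omega\times(-\infty,0)\times\mathbb{R}^n$, away from the singularity at $z=0$) must instead come from comparing $u_k$ from above with a small paraboloid $\epsilon_0\phi_0$ on a ball $B_{r_0}(x_0)\subset\subset\Omega$, exactly as in Property \ref{property4} of the paper, with $\epsilon_0$ chosen uniformly in $k$ via $\min_{K_0}f_k\geqslant\eta_0>0$. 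With that substitution (and the routine but nontrivial verification of the properties of $f_k$), your argument closes.
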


Based on the existence of viscosity solutions, here comes a natural question whether we can show the uniqueness and interior regularity of viscosity solutions. In fact, under the same assumptions of Theorem \ref{thm1}, there hold a series of interior regularity results as in the following second main theorem:
\begin{Theorem}[Interior Regularity Theorem]\label{thm2}
	Under the conditions of Theorem \ref{thm1}, if $u$ is a viscosity solution to the problem \eqref{equation}-\eqref{bcondition} over $\Omega$, then the following two conclusions hold:
	\begin{enumerate}[(i)]
		\item For any $\Omega_0\subset\subset\Omega$, we have that $u$ is strictly convex in $\Omega_0$ and 
		\begin{align*}
			&u\in W^{2,p}(\overline{\Omega_0}),\ \ \forall p\in (1,+\infty),\\
			&u\in C^{1,\mu}(\overline{\Omega_0}),\ \ \forall \mu\in (0,1).
		\end{align*}
		
		\item If $f(x,z,q)\in C^\infty(\Omega\times (-\infty,0)\times \mathbb{R}^n)$, then \[u\in C^\infty(\Omega).\]
	\end{enumerate} 
\end{Theorem}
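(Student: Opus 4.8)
The plan is to combine the \textit{a priori} global Hölder estimates from \cite{Li-Li} with Caffarelli's interior regularity theory for the Monge–Ampère equation. First I would observe that, since $u$ is a convex viscosity solution of $\det D^2 u = f(x,u,\nabla u)$ with $u=0$ on $\partial\Omega$, the global Hölder regularity of \cite{Li-Li} gives $u \in C^{0,\kappa}(\overline{\Omega})$ for some $\kappa\in(0,1)$; in particular $u$ is bounded, say $-N \leqslant u < 0$ on $\Omega$, and $u$ is locally Lipschitz in the interior (as a finite convex function). Fix $\Omega_0 \subset\subset \Omega_1 \subset\subset \Omega$. On $\Omega_1$ we have $d_x \geqslant c_1 > 0$, $|u| \geqslant c_2 > 0$ on $\Omega_0$ after a further step (see below), and $|\nabla u| \leqslant C_1$ on $\overline{\Omega_1}$. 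Hence $(f_3)$ together with positivity of $f$ shows that on $\Omega_1$ the right-hand side $g(x) := f(x,u(x),\nabla u(x))$ satisfies $0 < \lambda \leqslant g(x) \leqslant \Lambda < \infty$ for constants depending only on $\Omega_0,\Omega_1,\Omega$ and the structure data — provided we first know $|u|$ is bounded below on $\Omega_0$.

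The next step is strict convexity. Since $u=0$ on $\partial\Omega$ and $u$ is a nonpositive convex function not identically zero, $u < 0$ in $\Omega$, so on $\overline{\Omega_0}$ we have $|u| \geqslant c_2 > 0$; combined with the previous paragraph this yields two-sided bounds $0 < \lambda \leqslant g \leqslant \Lambda$ on $\Omega_1$. Now $u$ is an Alexandrov/viscosity solution of $\det D^2 u = g$ with $g$ bounded between positive constants on $\Omega_1$; by Caffarelli's theorem (the interior $C^{1,\alpha}$ and strict convexity results for Monge–Ampère equations with bounded measure, e.g. \cite{Figalli}) $u$ is strictly convex in $\Omega_1$, hence in $\Omega_0$. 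This is the step I expect to be the main obstacle, because Caffarelli's strict convexity theorem requires genuinely interior control: one must rule out that a supporting hyperplane touches $u$ along a segment reaching $\partial\Omega$, and this uses the boundary condition $u=0$ on all of $\partial\Omega$ together with $u<0$ inside to confine any such contact set to the interior, where the $\lambda \leqslant g \leqslant \Lambda$ bound forbids it.

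With strict convexity in hand, conclusion (i) follows from the standard regularity chain: Caffarelli's interior $C^{1,\alpha}$ estimate gives $u \in C^{1,\mu}(\overline{\Omega_0})$ for some $\mu$, but then we can bootstrap — $\nabla u$ is continuous on $\overline{\Omega_1}$, so $g(x)=f(x,u(x),\nabla u(x))$ is itself continuous on $\Omega_1$ by $(f_1)$, and Caffarelli's $W^{2,p}$ estimate applies to $\det D^2 u = g$ with continuous $g$ bounded between positive constants, yielding $u \in W^{2,p}(\overline{\Omega_0})$ for every $p\in(1,\infty)$; Sobolev embedding then upgrades this to $u\in C^{1,\mu}(\overline{\Omega_0})$ for every $\mu\in(0,1)$. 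For conclusion (ii), once $f$ is smooth: $u\in C^{1,\mu}_{\mathrm{loc}}$ makes $g(x)=f(x,u(x),\nabla u(x))$ a $C^{0,\mu}_{\mathrm{loc}}$ function, so by Caffarelli–Schauder theory $u \in C^{2,\mu}_{\mathrm{loc}}$; then $g \in C^{1,\mu}_{\mathrm{loc}}$ and, viewing $\det D^2 u = g$ as a uniformly elliptic equation (since $D^2 u$ is now locally bounded above and, by $\det D^2 u \geqslant \lambda > 0$, also below), the classical Schauder bootstrap for concave fully nonlinear elliptic equations (Evans–Krylov is not even needed here since $\det^{1/n}$ is concave and $D^2u$ lies in a compact subset of positive-definite matrices) gives $u \in C^{k,\mu}_{\mathrm{loc}}$ for all $k$, i.e. $u \in C^\infty(\Omega)$.
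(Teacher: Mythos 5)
Your overall architecture --- interior bounds on $u$ and $\nabla u$, hence two-sided bounds $0<\Lambda_1\leqslant f(x,u,\nabla u)\leqslant\Lambda_2$ on compact subsets, then Caffarelli's strict convexity, interior $C^{1,\alpha}$, $W^{2,p}$ with continuous right-hand side, Sobolev embedding, and finally the $C^{2,\mu}$-plus-linearized-Schauder induction for (ii) --- is exactly the paper's, and everything downstream of strict convexity is carried out the same way. The gap is in the strict convexity step itself. Caffarelli's strict convexity theorem (Theorem 4.10/Corollary 4.11 in \cite{Figalli}) is not a purely interior statement: it requires either the bounds $\lambda\leqslant\det D^2u\leqslant\Lambda$ up to the boundary of the domain on which it is applied, or controlled (e.g.\ linear, or $C^{1,\alpha}$ with $\alpha>1-\frac{2}{n}$) boundary data there. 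You apply it ``in $\Omega_1$'', where you only know $\lambda\leqslant g\leqslant\Lambda$ but know nothing about $u\big|_{\partial\Omega_1}$; and you cannot apply it on $\Omega$ itself, because $(f_3)$ allows $f(x,u,\nabla u)$ to degenerate or blow up at $\partial\Omega$, so the two-sided bound fails precisely where you would need it. Your proposed repair --- that $u=0$ on $\partial\Omega$ together with $u<0$ inside ``confines any such contact set to the interior'' --- is not a proof and is false at the level of generality at which you invoke it: the cone $u(x)=|x|-1$ on the unit ball is convex, vanishes on the boundary and is negative inside, yet every supporting hyperplane at a point $x_0\neq 0$ touches the graph along a segment reaching $\partial\Omega$. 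Excluding such behavior requires the equation to be nondegenerate along the contact set all the way to wherever it terminates, which is exactly what is unavailable near $\partial\Omega$ here.

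The paper closes this gap with a sublevel-set device that your sketch is missing: choose $t_0<0$ with $\Omega_0\subset\subset G_{t_0}:=\{x\in\Omega:\ u(x)<t_0\}\subset\subset\Omega$. The set $G_{t_0}$ is convex, its closure is a compact subset of $\Omega$ (so $\Lambda_1'\leqslant\det D^2u\leqslant\Lambda_2'$ holds on all of $\overline{G_{t_0}}$), and $u\equiv t_0$ on $\partial G_{t_0}$, i.e.\ the boundary data is constant, hence linear. Corollary 4.11 in \cite{Figalli} then applies verbatim on $G_{t_0}$ and gives strict convexity there, hence on $\Omega_0$; the same sets are reused to run the $C^{1,\alpha}$, $W^{2,p}$ and $C^{2,\mu}$ steps. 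A second, smaller point: your assertion that $u$ is ``not identically zero'', from which you deduce $u<0$ in $\Omega$, is itself not automatic from convexity and the boundary condition; it is obtained by comparing $u$ with a small interior paraboloid via the comparison principle, as in Property \ref{property4}. Once strict convexity on $G_{t_0}$ is secured and $u<0$ justified, the remainder of your bootstrap coincides with the paper's proof and is correct.
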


Our third main theorem is about the uniqueness issue. Though we believe that the viscosity solution constructed in Theorem \ref{thm1} is unique, our  methods are limited at present and hence we have to leave this property as an open problem for further studies. Instead, we are capable of showing 
 the uniqueness of the  viscosity solution under the assumptions of Theorem \ref{thm1} together with the following two extra assumptions: 
 \begin{enumerate}
 	\item[$(f_1')$] there exists a constant $\delta_0\in (0,1)$ such that 
 	$f(x,z,q)\in C_{\operatorname{loc}}^{0,\delta_0}(\Omega\times(-\infty,0)\times\mathbb{R}^n)$;
 	\item[$(f_2')$] for all $
 	x\in\Omega, q\in \mathbb{R}^n$, $f(x,z,q)$ is strictly increasing in the second argument $z$. 
 \end{enumerate}
  More precisely, we can elaborate our third main theorem as follows:
		
\begin{Theorem}[Uniqueness Theorem]\label{thm3}
	Under the conditions of Theorem \ref{thm1}, if $f(x,z,q)$ further satisfies $(f_1')$ and $(f_2')$, 
	then the viscosity solution to  the problem \eqref{equation}-\eqref{bcondition} over $\Omega$ is unique. 
\end{Theorem}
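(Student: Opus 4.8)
The plan is to argue by contradiction using the comparison principle together with the strict monotonicity $(f_2')$ and the interior regularity already established in Theorem \ref{thm2}. Suppose $u_1,u_2$ are two viscosity solutions of \eqref{equation}-\eqref{bcondition}; I want to show $u_1\equiv u_2$. First I would reduce matters to a pointwise comparison in the interior: since both solutions attain the boundary datum $0$ continuously on $\partial\Omega$ (this is part of the definition of viscosity solution together with the barrier/a priori estimates coming from \cite{Li-Li}), it suffices to prove $u_1\leqslant u_2$ in $\Omega$ and then swap the roles. Fix any subdomain $\Omega_0\subset\subset\Omega$; by Theorem \ref{thm2}(i) both $u_i$ are strictly convex and belong to $C^{1,\mu}(\overline{\Omega_0})\cap W^{2,p}(\overline{\Omega_0})$, and with the added H\"older hypothesis $(f_1')$ a bootstrap through Caffarelli's $W^{2,p}$ and Schauder theory upgrades them to $C^{2,\delta_0}(\Omega)$, so that the equation \eqref{equation} holds classically in $\Omega$ and both solutions are genuine solutions of the elliptic (in the Monge-Amp\`ere sense, on the set of convex functions) PDE.

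Next I would run the standard comparison argument for $\det D^2 u = f(x,u,\nabla u)$ with $f$ nondecreasing in $u$. Consider $w=u_1-u_2$ and suppose for contradiction that $\sup_{\Omega} w = m > 0$. Because $w\to 0$ near $\partial\Omega$, the supremum is attained at some interior point $x_0\in\Omega$, and in a neighborhood of $x_0$ the function $u_1 - (u_2+m)$ is $\leqslant 0$ with equality at $x_0$, so $D^2 u_1(x_0) \leqslant D^2 u_2(x_0)$ and $\nabla u_1(x_0)=\nabla u_2(x_0)$. Taking determinants (using that both Hessians are positive definite at $x_0$ by strict convexity) gives
\begin{align*}
f(x_0, u_1(x_0), \nabla u_1(x_0)) = \det D^2 u_1(x_0) \leqslant \det D^2 u_2(x_0) = f(x_0, u_2(x_0), \nabla u_2(x_0)).
\end{align*}
Since $\nabla u_1(x_0)=\nabla u_2(x_0)$ and $u_1(x_0) = u_2(x_0) + m > u_2(x_0)$, the strict monotonicity $(f_2')$ forces the left side to be strictly larger than the right side, a contradiction. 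Hence $m\leqslant 0$, i.e. $u_1\leqslant u_2$, and by symmetry $u_1=u_2$.

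The main obstacle, and the point requiring the most care, is the justification that the comparison can be carried out at a genuine interior maximum with classical second-order information: the a priori bounds guarantee $u_i\in C(\overline\Omega)$ with $u_i=0$ on $\partial\Omega$, but one must make sure the maximum of $u_1-u_2$ is not "pushed to the boundary" in a degenerate way, and that the interior $C^2$ regularity from Theorem \ref{thm2} combined with $(f_1')$ is strong enough to evaluate both equations pointwise at $x_0$. An alternative that avoids upgrading to $C^2$ is to run the argument purely at the viscosity level: touch $u_1$ from above at $x_0$ by the smooth function $u_2 + m$ (legitimate since $u_2$ is itself $C^{1,\mu}$ and can be touched from below by a paraboloid after a further localization), use the subsolution property of $u_1$ and the supersolution property of $u_2$, and conclude via $(f_2')$; either way the strict inequality in $(f_2')$ is exactly what breaks the tie that the plain (non-strict) comparison principle leaves open, which is why $(f_2)$ alone was insufficient and the stronger hypothesis is needed.
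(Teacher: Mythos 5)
Your proposal is correct and follows essentially the same route as the paper: first upgrade both viscosity solutions to classical $C^{2}$ solutions via Theorem \ref{thm2} together with $(f_1')$ and Caffarelli's interior $C^{2,\alpha}$ theory, then compare at an interior extremum of the difference, where equality of gradients and the ordering of Hessians (hence of determinants) contradict the strict monotonicity $(f_2')$. The only cosmetic difference is that you argue at a positive maximum of $u_1-u_2$ while the paper argues at a negative minimum of $u-\widehat{u}$, which is the same argument with the roles swapped.
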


The remainder of this paper is arranged as follows.  In Section \ref{Sec:Pre}, we present basic concepts and detailed preliminary lemmas for the problem \eqref{equation}-\eqref{bcondition}. We devote Section \ref{Sec:W} and Section \ref{Sec:u} to constructing  auxiliary functions $W$ and $u$ respectively, which are intended to motivate the investigation of viscosity solution in what follows. In Section  \ref{Sec:EV}, we 
 show the existence  of viscosity solutions to the problem \eqref{equation}-\eqref{bcondition}. In Section \ref{Sec:IR}, 
based on the existence result, we prove Theorem \ref{thm2} and study a series of interior regularity, including $W^{2,p}$  with $p\in(1,+\infty)$,  $C^{1,\mu}$  with $\mu\in(0,1)$, and  $C^\infty$ of viscosity solutions. 
The uniqueness  of viscosity solution and the proof of our third main Theorem \ref{thm3}  are finally given in 
Section \ref{Sec:Unique}. The underlying idea of our proof is to use comparison principles appropriately, to exploit the convex analysis for domains and functions subtly,  as well as to  construct viscosity subsolutions and viscosity supersolutions carefully.

\section{Preliminaries}\label{Sec:Pre}

Let us begin with the concepts of classical solutions and viscosity solutions to the problem \eqref{equation}-\eqref{bcondition}.

\begin{Definition}\label{def:class}
		Let $u\in C^2(\Omega)\cap C(\overline{\Omega})$ be a convex function. 
	\begin{enumerate}[(i)]
		\item 
	We say that $u$ is  a classical subsolution to \eqref{equation} over $\Omega$ if $$\operatorname{det} D^2 u\geqslant f(x,u,\nabla u) \  \text{ in } \Omega.$$  
	\item 	We say that $u$ is  a classical supersolution to \eqref{equation}  over $\Omega$ if $$\operatorname{det} D^2 u\leqslant f(x,u,\nabla u) \  \text{ in } \Omega.$$ 
	\item $u$ is called a classical solution to \eqref{equation}  over $\Omega$ if it is both a classical subsolution to \eqref{equation}  over $\Omega$ and a classical supersolution to \eqref{equation}  over $\Omega$.
	\item $u$ is called a classical solution to the  problem  \eqref{equation}-\eqref{bcondition}  over $\Omega$ if it is a classical solution to \eqref{equation}  over $\Omega$ and satisfies \eqref{bcondition} on $\partial\Omega$.  
\end{enumerate}
\end{Definition}

\begin{Definition}\label{def:vis}
	Let $u\in C(\overline{\Omega})$ be a convex function. 
\begin{enumerate}[(i)]
\item We say that $u$ is a viscosity subsolution to \eqref{equation}  over $\Omega$ 
if 
for any point $x_0\in\Omega$,  any open neighborhood $U(x_0)$ and any convex function $\phi\in C^2(U(x_0)\cap\Omega)$ satisfying
\[(\phi-u)(x)\geqslant (\phi-u)(x_0),\ \ \forall x\in U(x_0)\cap\Omega,\]
we have
\[\det D^2\phi(x_0)\geqslant    f(x_0,u(x_0),\nabla \phi(x_0)).\]
\item We say that $u$ is a viscosity  supersolution to \eqref{equation}  over $\Omega$ 
if 
for any point $x_0\in\Omega$,  any open neighborhood $U(x_0)$ and any convex function $\phi\in C^2(U(x_0)\cap\Omega)$ satisfying
\[(\phi-u)(x) \leqslant (\phi-u)(x_0),\ \ \forall x\in U(x_0)\cap\Omega,\]
we have
\[\det D^2\phi(x_0) \leqslant f(x_0,u(x_0),\nabla \phi(x_0)).\]
\item  $u$ is called a viscosity solution to \eqref{equation}  over $\Omega$ if it is both a viscosity subsolution to \eqref{equation}  over $\Omega$ and a viscosity supersolution to \eqref{equation}  over $\Omega$.
	\item $u$ is called a viscosity  solution to the   problem  \eqref{equation}-\eqref{bcondition}  over $\Omega$ if it is a viscosity solution to \eqref{equation}  over $\Omega$ and satisfies \eqref{bcondition} on $\partial\Omega$. 
\end{enumerate}
\end{Definition}

We also refer the readers to the references \cite{Guti,Crandall} for example for more details on viscosity solutions. We now turn to prove an equivalent condition for viscosity solutions and a comparison principle for viscosity subsolutions/supersolutions.

\begin{Lemma}\label{lemma:equiv}
	If $u\in C^2(\Omega)$, then the following equivalence holds:
	\begin{equation}\label{equivalence1}
		\text{$u$ is a viscosity solution to \eqref{equation}  over $\Omega$ if and only if $u$ is a classical solution to \eqref{equation}  over $\Omega$.}
	\end{equation}
\end{Lemma}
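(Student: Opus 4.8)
The plan is to establish the equivalence in Lemma~\ref{lemma:equiv} by proving both implications directly from Definition~\ref{def:vis} and Definition~\ref{def:class}, exploiting the fact that when $u$ itself is $C^2$ it can serve as its own test function.

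\smallskip

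\noindent\textbf{Direction $(\Leftarrow)$: classical $\Rightarrow$ viscosity.} Suppose $u\in C^2(\Omega)$ is a classical solution, so $\det D^2u(x)=f(x,u(x),\nabla u(x))$ pointwise and $u$ is convex. Let $x_0\in\Omega$, let $U(x_0)$ be an open neighborhood and let $\phi\in C^2(U(x_0)\cap\Omega)$ be convex with $(\phi-u)(x)\geqslant(\phi-u)(x_0)$ for all $x\in U(x_0)\cap\Omega$. Then $\phi-u$ attains an interior local minimum at $x_0$, so $\nabla\phi(x_0)=\nabla u(x_0)$ and $D^2\phi(x_0)\geqslant D^2u(x_0)$ as symmetric matrices. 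Since both matrices are positive semidefinite (by convexity) and $D^2\phi(x_0)\geqslant D^2u(x_0)\geqslant 0$, monotonicity of the determinant on positive semidefinite matrices gives $\det D^2\phi(x_0)\geqslant\det D^2u(x_0)=f(x_0,u(x_0),\nabla u(x_0))=f(x_0,u(x_0),\nabla\phi(x_0))$, which is exactly the viscosity subsolution inequality. The supersolution case is symmetric: if $(\phi-u)(x)\leqslant(\phi-u)(x_0)$ then $\phi-u$ has an interior maximum at $x_0$, so $\nabla\phi(x_0)=\nabla u(x_0)$ and $D^2\phi(x_0)\leqslant D^2u(x_0)$, whence $\det D^2\phi(x_0)\leqslant\det D^2u(x_0)=f(x_0,u(x_0),\nabla\phi(x_0))$. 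Hence $u$ is a viscosity solution.

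\smallskip

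\noindent\textbf{Direction $(\Rightarrow)$: viscosity $\Rightarrow$ classical.} Suppose $u\in C^2(\Omega)$ is a viscosity solution. Fix $x_0\in\Omega$. Since $u\in C^2$, we may take $\phi=u$ on a small convex neighborhood $U(x_0)\cap\Omega$ (convexity of $u$ on a convex domain means $\phi=u$ is an admissible test function there), and then $(\phi-u)\equiv 0$, so both the subsolution and supersolution test inequalities apply with this choice. The viscosity subsolution property yields $\det D^2u(x_0)=\det D^2\phi(x_0)\geqslant f(x_0,u(x_0),\nabla\phi(x_0))=f(x_0,u(x_0),\nabla u(x_0))$, while the viscosity supersolution property yields $\det D^2u(x_0)\leqslant f(x_0,u(x_0),\nabla u(x_0))$. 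Combining, $\det D^2u(x_0)=f(x_0,u(x_0),\nabla u(x_0))$. Since $x_0\in\Omega$ was arbitrary, $u$ is a classical solution to \eqref{equation} over $\Omega$.

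\smallskip

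\noindent\textbf{Main obstacle.} The only genuinely delicate point is the bookkeeping with the test functions in Definition~\ref{def:vis}: one must check that $u$ itself (restricted to a small convex neighborhood of $x_0$) legitimately qualifies as an admissible test function $\phi$, i.e.\ that $U(x_0)\cap\Omega$ can be taken convex (which it can, by shrinking $U(x_0)$ to a ball, since $\Omega$ is convex) and that $\phi=u$ is convex there (immediate since $u$ is convex on $\Omega$). Beyond this, everything reduces to the elementary first- and second-order conditions at an interior extremum of $\phi-u$ together with the monotonicity of $\det$ on positive semidefinite symmetric matrices; no PDE machinery is required. I would also remark that strictly only one half of $(\Leftarrow)$ is needed in later sections, but stating and proving the full equivalence costs nothing extra.
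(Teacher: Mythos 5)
Your proof is correct and follows essentially the same route as the paper: one direction by testing with $\phi=u$ itself, the other via the first- and second-order conditions at an interior extremum of $\phi-u$ together with monotonicity of $\det$ on positive semidefinite matrices. If anything, you are slightly more careful than the paper, which passes from $D^2\phi(x_0)\geqslant(\leqslant)D^2u(x_0)$ to the determinant inequality without explicitly noting that both matrices must be positive semidefinite for this step to be valid.
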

\begin{proof}
	Let $u\in C^2(\Omega)$. 
To prove \eqref{equivalence1}, we only need to prove that both \eqref{equivalence2} and \eqref{equivalence3} hold:
\begin{equation}\label{equivalence2}
	\text{$u$ is a viscosity subsolution to \eqref{equation}  over $\Omega$ if and only if $u$ is a classical subsolution to \eqref{equation}  over $\Omega$.}
\end{equation}
\begin{equation}\label{equivalence3}	\text{$u$ is a viscosity supersolution to \eqref{equation}  over $\Omega$ if and only if $u$ is a classical supersolution to \eqref{equation}  over $\Omega$.}
\end{equation}
Now the proof of this lemma can be divided into the following two steps.

\smallskip

\noindent
\textbf{Step 1}: We show that if $u$ is a viscosity subsolution (supersolution) to \eqref{equation}  over $\Omega$, then $u$ is a classical subsolution  (supersolution) to \eqref{equation}  over $\Omega$.

Let $u$ be a viscosity subsolution  (supersolution) to \eqref{equation}  over $\Omega$. For any $x_0\in\Omega$ and any $x\in U(x_0)\cap \Omega$,  we take $\phi(x)=u(x)$. Since $u\in C^2(\Omega)$, there holds $\phi\in C^2(U(x_0)\cap \Omega)$. As a result of $\phi(x)=u(x)$, 
we have $\phi(x)-u(x)=0=\phi(x_0)-u(x_0)$ and therefore 
\[(\phi-u)(x)\geqslant(\leqslant)(\phi-u)(x_0).\]
By Definition \ref{def:vis}, it follows that 
\[\det D^2\phi(x_0)\geqslant  (\leqslant) f(x_0,u(x_0),\nabla \phi(x_0)),\]
which together with $\phi(x)=u(x)$ implies 
\[\det D^2u(x_0)\geqslant  (\leqslant) f(x_0,u(x_0),\nabla u(x_0)).\]
Since $x_0\in\Omega$ is arbitrary, we infer that $u$ is a classical subsolution (supersolution) to \eqref{equation}  over $\Omega$.

\smallskip

\noindent
\textbf{Step 2}: We show that if $u$ is a classical subsolution (supersolution) to \eqref{equation}  over $\Omega$, then $u$ is a viscosity subsolution (supersolution) to \eqref{equation}  over $\Omega$.

Let $u$ be a classical subsolution  (supersolution) to \eqref{equation}  over $\Omega$. Then for any $x_0\in\Omega$, we have 
\[\det D^2u(x_0)\geqslant  (\leqslant) f(x_0,u(x_0),\nabla u(x_0)).\]
For any convex function $\phi\in C^2(U(x_0)\cap\Omega)$ satisfying
\[(\phi-u)(x)\geqslant(\leqslant)(\phi-u)(x_0),\ \ \forall x\in U(x_0)\cap\Omega,\]
it is clear that $\phi-u$ takes its minimum (maximum) at $x_0$. We immediately obtain
$$\nabla(\phi-u)\big|_{x_0}=0,\ \ \nabla^2(\phi-u)\big|_{x_0}\geqslant(\leqslant)0,$$
i.e. 
$$\nabla \phi(x_0)=\nabla u(x_0),\ \ \det D^2\phi(x_0)\geqslant(\leqslant)\det D^2u(x_0).$$
We remark here that in the sequel, for any function $\Phi$, we always use the notation $\nabla^2\Phi\geqslant(\leqslant)0$ to represent that the Hessian matrix $(\nabla^2\Phi)$ is positive (negative) semi-definite. 
It then follows that 
\[\det D^2\phi(x_0)\geqslant(\leqslant)\det D^2u(x_0)\geqslant  (\leqslant)f(x_0,u(x_0),\nabla u(x_0))= f(x_0,u(x_0),\nabla \phi(x_0)),\]
which means that $u$ is a  viscosity subsolution (supersolution) to \eqref{equation}  over $\Omega$.

Until now,  we have finished the proof of this lemma. 
\end{proof}

For the readers' convenience, we now collect Lemma 2.1 in \cite{J-WXD} as the following  comparison principle. 
This version will provide more convenience for studying the existence of viscosity solutions  in this paper, see Section \ref{Sec:EV} for more details. 
 
\begin{Lemma}[comparison principle of Dirichlet problem  \eqref{equation}-\eqref{bcondition}]\label{lemma1}
	Let $u\in C(\overline{\Omega})$ be a nonzero convex function and $f(x,z,q):\Omega	\times(-\infty,0) \times \mathbb{R}^n\to\mathbb{R}$ satisfies $(f_2)$. 
\begin{enumerate}[(i)]
\item (comparison principle for viscosity subsolution) If $u$ is a viscosity subsolution to \eqref{equation}  over $\Omega$ and $\phi\in C^2(\Omega)\cap C(\overline{\Omega})$ is convex function satisfying 
\begin{equation*}\begin{split}
		\det D^2 \phi&<f(x,\phi,\nabla \phi)\text{\hspace{0.2cm} in } \Omega,\\
		u&\leqslant\phi\leqslant 0\text{\hspace{0.2cm} on } \partial\Omega,
\end{split}\end{equation*}
then 
\[u\leqslant \phi \text{\hspace{0.2cm} in } \overline{\Omega}.\]
\item (comparison principle for viscosity supersolution) If $u$ is a viscosity supersolution to \eqref{equation}  over $\Omega$ and $\phi\in C^2(\Omega)\cap C(\overline{\Omega})$ is convex function satisfying 
\begin{equation*}\begin{split}
		\det D^2 \phi&>f(x,\phi,\nabla \phi)\text{\hspace{0.2cm} in } \Omega,\\
		0&\geqslant u\geqslant \phi\text{\hspace{0.2cm} on } \partial\Omega,
\end{split}\end{equation*}
then 
\[u\geqslant \phi \text{\hspace{0.2cm} in } \overline{\Omega}.\]
\end{enumerate}
\end{Lemma}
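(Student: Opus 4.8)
The plan is to prove both parts by the classical maximum-principle argument, taking advantage of the fact that $\phi$ is required to satisfy a \emph{strict} differential inequality (so that no regularization of $\phi$ is needed) and of the monotonicity hypothesis $(f_2)$, which is used to compare the values of $f$ at the second arguments $\phi$ and $u$.

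For part (i) I would argue by contradiction: suppose $u\leqslant\phi$ fails in $\overline{\Omega}$, so that $m:=\max_{\overline{\Omega}}(u-\phi)>0$. Since $u-\phi\in C(\overline{\Omega})$ and $u\leqslant\phi$ on $\partial\Omega$, this maximum is attained at some interior point $x_0\in\Omega$. Put $\psi:=\phi+m$, which is convex and of class $C^2$ in a neighbourhood of $x_0$, and observe that $(\psi-u)(x)=m-(u-\phi)(x)\geqslant 0=(\psi-u)(x_0)$ throughout $\Omega$; thus $\psi$ is an admissible test function touching $u$ from above at $x_0$ in the sense of Definition \ref{def:vis}(i), and the viscosity subsolution property gives $\det D^2\phi(x_0)=\det D^2\psi(x_0)\geqslant f(x_0,u(x_0),\nabla\psi(x_0))=f(x_0,u(x_0),\nabla\phi(x_0))$. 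On the other hand, the hypothesis on $\phi$ yields $\det D^2\phi(x_0)<f(x_0,\phi(x_0),\nabla\phi(x_0))$, while $u(x_0)=\phi(x_0)+m>\phi(x_0)$ together with $(f_2)$ gives $f(x_0,\phi(x_0),\nabla\phi(x_0))\leqslant f(x_0,u(x_0),\nabla\phi(x_0))$. Chaining these three relations produces $\det D^2\phi(x_0)\geqslant f(x_0,u(x_0),\nabla\phi(x_0))\geqslant f(x_0,\phi(x_0),\nabla\phi(x_0))>\det D^2\phi(x_0)$, a contradiction; hence $m\leqslant 0$, i.e. $u\leqslant\phi$ in $\overline{\Omega}$.

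Part (ii) is handled symmetrically: assuming $m:=\max_{\overline{\Omega}}(\phi-u)>0$, the boundary condition $u\geqslant\phi$ on $\partial\Omega$ forces the maximum to be attained at an interior point $x_0\in\Omega$; taking $\psi:=\phi-m$ one checks that $(\psi-u)(x)=(\phi-u)(x)-m\leqslant 0=(\psi-u)(x_0)$, so $\psi$ touches $u$ from below at $x_0$ as in Definition \ref{def:vis}(ii), the viscosity supersolution property gives $\det D^2\phi(x_0)\leqslant f(x_0,u(x_0),\nabla\phi(x_0))$, and a contradiction with $\det D^2\phi(x_0)>f(x_0,\phi(x_0),\nabla\phi(x_0))$ follows from $u(x_0)=\phi(x_0)-m<\phi(x_0)$ and $(f_2)$.

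I do not expect a genuine obstacle here; the only point demanding a little care is that $f$ be evaluated only at admissible second arguments, i.e. that $u(x_0),\phi(x_0)\in(-\infty,0)$. This is a matter of convex analysis: a convex function on a bounded convex domain attains its maximum on the boundary, so $u,\phi\leqslant 0$ throughout $\overline{\Omega}$ (using the boundary bounds $u,\phi\leqslant 0$), and a non-constant convex function cannot attain its maximum at an interior point, so if $u$ vanished somewhere in $\Omega$ it would be identically $0$, contrary to assumption; hence $u<0$ in $\Omega$. In part (i) this gives $\phi(x_0)=u(x_0)-m<u(x_0)<0$, and in part (ii) the strict inequality $\det D^2\phi>f(x,\phi,\nabla\phi)$ in $\Omega$ already presupposes $\phi<0$ there, so $u(x_0)=\phi(x_0)-m<\phi(x_0)<0$. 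The whole content of the lemma is that the inequalities imposed on $\phi$ are strict, which is exactly what lets the bare maximum principle close the argument with no perturbation device.
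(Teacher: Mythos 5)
Your proposal is correct and follows essentially the same contradiction argument as the paper: locate an interior extremum of $u-\phi$, invoke the viscosity sub/supersolution property with $\phi$ (or its constant shift, which changes nothing since derivatives are unaffected) as test function, and close the loop using $(f_2)$ together with the strict differential inequality on $\phi$. Your extra care about keeping the second argument of $f$ in $(-\infty,0)$ matches the paper's observation that $u$ nonzero, convex, and $\leqslant 0$ on $\partial\Omega$ forces $u<0$ in $\Omega$.
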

\begin{proof}
We will only give the details of (i), because  (ii) can be proved in a similar manner. 

Now it suffices to prove that $$\displaystyle\min_{x\in\overline{\Omega}}(\phi-u)\geqslant 0.$$
The proof is by contradiction. 
Suppose there exists a point  $x_0\in\overline{\Omega}$ such that
\begin{equation}\label{eq1}
\min_{x\in\overline{\Omega}}(\phi-u)=(\phi-u)(x_0)< 0.
\end{equation}   
Since $(\phi-u)\big|_{\partial\Omega}\geqslant 0$, we obtain $x_0\notin\partial\Omega$, which means $x_0\in\Omega$. 
It immediately follows that $\phi\in C^2(U(x_0)\cap \Omega)$ and 
\begin{equation*} 
(\phi-u)(x)\geqslant (\phi-u)(x_0),\ \ \forall x\in U(x_0)\cap\Omega.
\end{equation*}
Using the condition that $u$ is a viscosity subsolution to \eqref{equation}  over $\Omega$, we then have
\begin{equation}\label{eq2}
\det D^2\phi(x_0)\geqslant    f(x_0,u(x_0),\nabla \phi(x_0)).
\end{equation}
By virtue of $u\big|_{\partial\Omega}\leqslant 0$ and the fact that $u$ is a nonzero convex function, there holds $u(x_0)<0$. Together with \eqref{eq1}, it now yields $\phi(x_0)<u(x_0)<0$. According to \eqref{eq2} and $(f_2)$, we derive
\[\det D^2\phi(x_0)\geqslant    f(x_0,\phi(x_0),\nabla \phi(x_0)),\]
which contradicts the condition that $\det D^2 \phi<f(x,\phi,\nabla \phi)$ in  $\Omega$. We have thus proved the lemma.
\end{proof}

\section{Construction of function $W$}\label{Sec:W}

In this section, we first construct an auxiliary function $W,$  and then show that $W$ is a viscosity subsolution to the equation \eqref{equation}. This is motivated by the method of establishing \textit{a priori} global H\"older estimates  in \cite{Li-Li}.

\begin{Lemma}\label{lemma2}
Let $\Omega\subset\mathbb{R}^n$ be a bounded convex domain. Then for any $x_0\in\partial\Omega$, there exists a convex function $W\in C^2(\Omega)\cap C(\overline{\Omega})$ such that $W(x_0)=0$, $W\big|_{\partial\Omega}\leqslant 0$, $W$ is  a classical subsolution to \eqref{equation}  over $\Omega$ and also a  viscosity subsolution to \eqref{equation}  over $\Omega$.
\end{Lemma}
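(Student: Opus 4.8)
The plan is to put the configuration in a normal form by an affine change of variables and then write $W$ explicitly as a small multiple of a quadratic that is negative on $\overline\Omega$, times a power of the distance to a supporting hyperplane at $x_0$.

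First I would normalize. Using the translation and rotation invariance of $u$, $|\nabla u|$, $\det D^2 u$ and of $(f_3)$ recorded in the introduction, I may assume $x_0 = 0$, that $\{x_n = 0\}$ supports $\Omega$ at $x_0$, and $\Omega \subset \{x_n > 0\}$; boundedness then gives $\overline\Omega \subset \overline{B_R(0)} \cap \{0 \le x_n \le b\}$ for some $R, b > 0$. The only geometric fact needed is $d_x \le x_n$ on $\Omega$ (since the closed ball $\overline{B(x, d_x)} \subset \overline\Omega \subset \{x_n \ge 0\}$ forces $x_n - d_x \ge 0$); as $\beta - n - 1 \ge 0$ this yields $d_x^{\beta - n - 1} \le x_n^{\beta - n - 1} \le \max\{1, b\}^{\,\beta - n - 1}$ on $\Omega$. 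Hence, by $(f_3)$, it suffices to produce a convex $W \in C^2(\Omega) \cap C(\overline\Omega)$ with $W(0) = 0$, $W|_{\partial\Omega} \le 0$, and
\[
\det D^2 W \;\ge\; A\, x_n^{\,\beta - n - 1}\,(-W)^{-\alpha}\,\bigl(1 + |\nabla W|^2\bigr)^{\gamma/2}\qquad\text{in }\Omega .
\]

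Next I would take the ansatz $W(x) = \varepsilon\bigl(|x|^2 - M\bigr)\,x_n^{\theta}$ with $\theta \in (0,1)$ small, $M$ large enough that $|x|^2 - M < 0$ on $\overline\Omega$, and $\varepsilon > 0$ to be fixed at the end. Then $W \in C^\infty(\Omega)$ (as $x_n > 0$ there), $W$ extends continuously to $\overline\Omega$ with $W = 0$ on $\overline\Omega \cap \{x_n = 0\}$, so $W(0) = 0$, and $W < 0$ on the rest of $\partial\Omega$. Writing $D^2 W$ in block form in $(x', x_n)$, the upper-left $(n-1)\times(n-1)$ block is $2\varepsilon x_n^\theta I > 0$, and a Schur-complement computation shows that for $M$ large the Schur complement is $\ge c\,\varepsilon\,x_n^{\theta - 2}$ (the dominant term being $\theta(1-\theta)(M - |x|^2)x_n^{\theta-2}$); thus $D^2 W > 0$, $W$ is convex, and
\[
\det D^2 W \;\ge\; c_n(\theta, R)\,\varepsilon^{\,n}\, x_n^{\,n\theta - 2}\qquad\text{in }\Omega .
\]
To verify the differential inequality I would use $-W \asymp \varepsilon x_n^\theta$ and $|\nabla W| \asymp \varepsilon x_n^{\theta - 1} \to \infty$ near $\{x_n = 0\}$ (all quantities comparable to positive constants away from that face); substituting these together with $d_x \le x_n$ reduces the inequality to a comparison of powers of $x_n$, which as $\theta \to 0^+$ becomes exactly $\max\{\gamma, 0\} \le \beta - n + 1$ — strict by $(f_3)$ — so a small $\theta$ suffices, and the remaining finitely many comparisons of constants involve powers $\varepsilon^{\,n + \alpha - \max\{\gamma,0\}}$ and are settled by taking $\varepsilon$ large or small as the sign of that exponent dictates (again using $(f_3)$: $\gamma < n + \alpha$). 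Finally, since $W \in C^2(\Omega)$ and is a classical subsolution, Lemma \ref{lemma:equiv} (more precisely the equivalence of classical and viscosity subsolutions established there for $C^2$ functions) shows $W$ is also a viscosity subsolution of \eqref{equation}.

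The hard part will be the bookkeeping at $\partial\Omega$, where the equation is simultaneously degenerate (through $d_x^{\beta - n - 1}$) and singular (through $(-W)^{-\alpha}$ together with the blow-up of $|\nabla W|$). Along a flat face of $\partial\Omega$ through $x_0$, a convex function $\le 0$ having a zero in the relative interior of a segment is constant on that segment, so $W$ vanishes identically there and $-W \to 0$ while $|\nabla W| \to \infty$ at the same time; it is precisely the demand that $\det D^2 W$ still dominate the right-hand side in this regime that fixes the exponent $\theta$ and makes the strict inequalities of $(f_3)$ necessary — this is the viscosity-barrier counterpart of the a priori H\"older estimates of \cite{Li-Li}. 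The analogous difficulty near flat portions of $\partial\Omega$ not containing $x_0$ — and the borderline subcase $n + \alpha = 0$, where rescaling $W$ is neutral — needs a little more care: one either uses the decay of $d_x^{\beta - n - 1}$ there, or replaces $x_n^\theta$ in the ansatz by $\chi^\theta$ for a suitable smooth positive $\chi$ on $\Omega$ vanishing on all of $\partial\Omega$, so that the gradient blow-up suppresses $f$ near every boundary point. I expect this boundary analysis, rather than any individual estimate, to be the substance of the lemma.
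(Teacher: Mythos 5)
Your strategy is essentially the paper's: normalize so that $x_0=0$ and $\{x_n=0\}$ supports $\Omega$, use $d_x\leqslant x_n$ together with $\beta\geqslant n+1$, take a separated barrier of the form $-(\text{positive bounded factor})\cdot x_n^{\theta}$ with $\theta$ small, reduce the subsolution inequality to a comparison of powers of $x_n$ whose favorable sign as $\theta\to0^+$ is exactly the strict inequality $\gamma<\min\{n+\alpha,\beta-n+1\}$, adjust the multiplicative constant, and invoke Lemma \ref{lemma:equiv} to pass from classical to viscosity subsolution. (The paper uses $W=-Mx_n^{\lambda}(N^2l^2-|x'|^2)^{1/2}$ and the radial determinant formula where you use $\varepsilon(|x|^2-M)x_n^{\theta}$ and a Schur complement; that difference is immaterial.)

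There is one genuine loose end, which you flag but do not resolve: your bookkeeping for $\gamma<0$ discards the gradient factor via $(1+|\nabla W|^2)^{\gamma/2}\leqslant 1$, leaving the constant comparison $\varepsilon^{n+\alpha}\gtrsim 1$, which is indeed stuck when $n+\alpha=0$ — a case permitted by $(f_3)$ (e.g.\ $\alpha=-n$, $\gamma<0$). The fix is not a new ansatz but a sharper elementary bound: for $\gamma<0$ use $(1+|\nabla W|^2)^{\gamma/2}\leqslant|\nabla W|^{\gamma}\asymp\varepsilon^{\gamma}x_n^{\gamma(\theta-1)}$, which restores the exponent $\varepsilon^{\,n+\alpha-\gamma}>0$ and the $x_n$-exponent condition $\theta(n+\alpha-\gamma)\leqslant\beta-n+1-\gamma$, so that "take $\varepsilon$ (or $M$) large" works uniformly in $\gamma\in\mathbb{R}$; this is precisely the paper's estimate \eqref{eq6}, which bounds $(1+|\nabla W|^2)^{-\gamma/2}$ below by a multiple of $(|W|/x_n)^{-\gamma}$ in both sign cases. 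By contrast, your anticipated difficulties near flat portions of $\partial\Omega$ away from $x_0$ are not real: the entire inequality is phrased in terms of $x_n\in(0,\operatorname{diam}\Omega]$ via $d_x^{\beta-n-1}\leqslant x_n^{\beta-n-1}$, so no separate boundary analysis is needed there, and the lemma only requires $W(x_0)=0$ with $W\leqslant0$ on $\partial\Omega$, so vanishing on a whole face through $x_0$ is harmless.
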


\begin{proof}
By some translations and rotations, we can assume  $\overline{\Omega}\subset\mathbb{R}^n_+$ and $x_0=0$.

We denote $x=(x',x_n)$, $x'=(x_1,...,x_{n-1})$ and
$r=\left|x^{\prime}\right|=\sqrt{x_{1}^{2}+\cdots + x_{n-1}^{2}}$. 
Let $l=\operatorname{diam}(\Omega)$ and  
\begin{equation}\label{W}
W(x)=	W(r, x_{n})=-Mx_n^\lambda\cdot(N^2l^2-r^2)^{\frac{1}{2}},
\end{equation}
where $\lambda\in(0,1)$, $M>1$, $N>1$ are  constants to be determined later. 

By direct calculation, we obtain 
\begin{align*}
	W_r&=Mrx_n^\lambda\cdot(N^2l^2-r^2)^{-\frac{1}{2}},\\
	W_n&=-\lambda M x_n^{\lambda-1}\cdot (N^2l^2-r^2)^{\frac{1}{2}},\\
	W_{rr}&=MN^2l^2x_n^\lambda\cdot (N^2l^2-r^2)^{-\frac{3}{2}},\\
	W_{nn}&=\lambda(1-\lambda) M x_n^{\lambda-2}\cdot (N^2l^2-r^2)^{\frac{1}{2}},\\
	W_{rn}&=\lambda M rx_n^{\lambda-1}\cdot(N^2l^2-r^2)^{-\frac{1}{2}}.
\end{align*}
It follows that 
\begin{equation}\label{eq7}
W_{rr}\cdot W_{nn}-W_{rn}^2=\lambda M^2N^2l^2 x_n^{2\lambda-2}\cdot\big(1-(1+r^2N^{-2}l^{-2})\lambda\big) \cdot(N^2l^2-r^2)^{-1},
\end{equation}
and 
\begin{equation}\label{eq3}
|\nabla W|=\sqrt{W_r^2+W_n^2}=\sqrt{\lambda^2+\frac{r^2x_n^2}{(N^2l^2-r^2)^2}}\cdot M x_n^{\lambda-1}\cdot(N^2l^2-r^2)^{\frac{1}{2}}.
\end{equation}

Using \eqref{eq7} and the explicit formula for $\det D^2W$ (see Lemma 2.9 in \cite{Li-Li}), we have
\begin{align*}
\det D^2W
&=\left(\frac{W_r}{r}\right)^{n-2}(W_{rr}\cdot W_{nn}-W_{rn}^2)\\
&=(Mx_n^\lambda\cdot(N^2l^2-r^2)^{-\frac{1}{2}})^{n-2}\cdot \lambda M^2N^2l^2 x_n^{2\lambda-2}\cdot\big(1-(1+r^2N^{-2}l^{-2})\lambda\big) \cdot(N^2l^2-r^2)^{-1}\\
&=\lambda M^nN^2l^2 x_n^{n\lambda-2}\cdot\big(1-(1+r^2N^{-2}l^{-2})\lambda\big)\cdot(N^2l^2-r^2)^{-\frac{n}{2}}.
\stepcounter{equation}\tag{\theequation}\label{eq8}
\end{align*}

Now it is time to estimate $[f(x,W,\nabla W)]^{-1}$. 
By \eqref{W} and \eqref{eq3}, we infer that
\begin{equation*}
	|\nabla W|=\sqrt{\lambda^2+\frac{r^2x_n^2}{(N^2l^2-r^2)^2}}\cdot\frac{|W|}{x_n}.
\end{equation*}
For any $x\in\Omega\subset \mathbb{R}_+^n$, we have 
\begin{equation}\label{r and xn}
r\in [0,l],\ \  x_n\in[0,l].
\end{equation} 
We can derive
\begin{equation*}
0\leqslant\frac{r^2x_n^2}{(N^2l^2-r^2)^2}\leqslant \frac{l^2\cdot l^2}{(N^2l^2-l^2)^2}=\frac{1}{(N^2-1)^2},
\end{equation*}
and hence
\begin{equation*}
\sqrt{\lambda^2+\frac{r^2x_n^2}{(N^2l^2-r^2)^2}}\in\left[\lambda,\sqrt{\lambda^2+\frac{1}{(N^2-1)^2}}\ \right].
\end{equation*}
Consequently, there holds
\begin{equation}\label{eq4}
	|\nabla W|\in\left[\lambda\cdot\frac{|W|}{x_n},\sqrt{\lambda^2+\frac{1}{(N^2-1)^2}}\cdot \frac{|W|}{x_n} \right].
\end{equation}
Since $\lambda\in(0,1)$ gives $\lambda-1<0$, then by \eqref{eq3}, we also have
\begin{equation*}
|\nabla W|\geqslant \lambda M l^{\lambda-1}\cdot(N^2l^2-l^2)^{\frac{1}{2}}=\lambda Ml^\lambda \cdot(N^2-1)^{\frac{1}{2}},
\end{equation*}
which immediately gives
\begin{equation*}
1\leqslant\left(\frac{|\nabla W|}{\lambda Ml^\lambda \cdot(N^2-1)^{\frac{1}{2}}}\right)^2=\frac{1}{\lambda^2M^2l^{2\lambda}\cdot(N^2-1)}\cdot|\nabla W|^2.
\end{equation*}
Therefore we derive
\begin{equation}\label{eq5}
1+|\nabla W|^2\in \left[|\nabla W|^2,\Big(\frac{1}{\lambda^2M^2l^{2\lambda}\cdot(N^2-1)}+1\Big)\cdot|\nabla W|^2\right].
\end{equation}
Putting \eqref{eq4} into \eqref{eq5} leads us to
\begin{equation*}
	1+|\nabla W|^2\in \left[\lambda^2\cdot\frac{|W|^2}{x_n^2},\Big(\frac{1}{\lambda^2M^2l^{2\lambda}\cdot(N^2-1)}+1\Big)\cdot\Big(\lambda^2+\frac{1}{(N^2-1)^2}\Big)\cdot\frac{|W|^2}{x_n^2}\right].
\end{equation*}
Then we have
\begin{equation*}
(1+|\nabla W|^2)^{-\frac{\gamma}{2}}\geqslant
\begin{cases}
\displaystyle\Big(\frac{1}{\lambda^2M^2l^{2\lambda}\cdot(N^2-1)}+1\Big)^{-\frac{\gamma}{2}}\cdot\Big(\lambda^2+\frac{1}{(N^2-1)^2}\Big)^{-\frac{\gamma}{2}}\cdot\Big(\frac{|W|}{x_n}\Big)^{-\gamma} &\text{ if }\gamma\geqslant 0,\\
\displaystyle\lambda^{-\gamma}\cdot\Big(\frac{|W|}{x_n}\Big)^{-\gamma}&\text{ if }\gamma< 0.
\end{cases}
\end{equation*}
It follows that  for any $\gamma\in\mathbb{R}$, there holds 
\begin{equation}\label{eq6}
(1+|\nabla W|^2)^{-\frac{\gamma}{2}}\geqslant C_1(\lambda, M,N,l,\gamma)\cdot\Big(\frac{|W|}{x_n}\Big)^{-\gamma}.
\end{equation}
Here, 
\begin{equation*}
C_1(\lambda, M,N,l,\gamma)=\min\left\{\Big(\frac{1}{\lambda^2M^2l^{2\lambda}\cdot(N^2-1)}+1\Big)^{-\frac{\gamma}{2}}\cdot\Big(\lambda^2+\frac{1}{(N^2-1)^2}\Big)^{-\frac{\gamma}{2}},\lambda^{-\gamma}\right\}>0,
\end{equation*}
which satisfies
\begin{equation*}
\lim_{M\to+\infty}C_1(\lambda, M,N,l,\gamma)=\min\left\{\Big(\lambda^2+\frac{1}{(N^2-1)^2}\Big)^{-\frac{\gamma}{2}},\lambda^{-\gamma}\right\}>0.
\end{equation*} 
In view of \eqref{eq6},  $\beta\geqslant n+1$ in the structure condition  $(f_3)$ and $d_x\leqslant x_n$, we can deduce from $(f_3)$ that
\begin{align*}
	[f(x,W,\nabla W)]^{-1}&\geqslant 
	A^{-1} d_x{}^{n+1-\beta}|W|^{\alpha}(1+|\nabla W|^2)^{-\frac{\gamma}{2}}\\
	&\geqslant A^{-1} x_n{}^{n+1-\beta}|W|^{\alpha}\cdot
	C_1(\lambda, M,N,l,\gamma)\cdot\Big(\frac{|W|}{x_n}\Big)^{-\gamma}\\
	&=A^{-1}C_1(\lambda, M,N,l,\gamma) x_n{}^{n+1-\beta+\gamma}|W|^{\alpha-\gamma}\\
	&=A^{-1}C_1(\lambda, M,N,l,\gamma) x_n{}^{n+1-\beta+\gamma}\big(Mx_n^\lambda\cdot(N^2l^2-r^2)^{\frac{1}{2}}\big)^{\alpha-\gamma}\\
	&=A^{-1}C_1(\lambda, M,N,l,\gamma)M^{\alpha-\gamma}x_n{}^{n+1-\beta+\gamma+\lambda(\alpha-\gamma)}\cdot(N^2l^2-r^2)^{\frac{\alpha-\gamma}{2}}.\stepcounter{equation}\tag{\theequation}\label{eq9}
\end{align*}

Combining \eqref{eq8} and \eqref{eq9} gives rise to 
\begin{align*}
	&\ \ \ \ \det D^2W\cdot[f(x,W,\nabla W)]^{-1}\\
	&\geqslant \lambda M^nN^2l^2 x_n^{n\lambda-2}\cdot\big(1-(1+r^2N^{-2}l^{-2})\lambda\big)\cdot(N^2l^2-r^2)^{-\frac{n}{2}}\\
	&\ \ \ \ \cdot A^{-1}C_1(\lambda, M,N,l,\gamma)M^{\alpha-\gamma}x_n{}^{n+1-\beta+\gamma+\lambda(\alpha-\gamma)}\cdot(N^2l^2-r^2)^{\frac{\alpha-\gamma}{2}}\\
	&=A^{-1}C_1(\lambda, M,N,l,\gamma)\lambda  M^{n+\alpha-\gamma}N^2l^2\cdot x_n{}^{\lambda(n+\alpha-\gamma)+n-1-\beta+\gamma} \cdot\big(1-(1+r^2N^{-2}l^{-2})\lambda\big)\\
	&\ \ \ \ \cdot(N^2l^2-r^2)^{\frac{\alpha-\gamma-n}{2}}.\stepcounter{equation}\tag{\theequation}\label{eq13}
\end{align*}

By virtue of $\gamma<\min\{n+\alpha,\beta-n+1\}$ in the structure condition $(f_3)$, we note that $n+\alpha>\gamma$ gives $n+\alpha-\gamma>0$ and $\beta-n+1>\gamma$ gives $n-1-\beta+\gamma<0$. Hence we have 
\[\lim_{\lambda\to 0^+}\left(\lambda(n+\alpha-\gamma)+n-1-\beta+\gamma\right)=n-1-\beta+\gamma<0.\]
This means that there exists a constant  $\lambda_0=\lambda_0(n,\alpha,\beta,\gamma)\in (0,1)$ such that 
\[\lambda_0(n+\alpha-\gamma)+n-1-\beta+\gamma<0.\]
Now we take $\lambda=\lambda_0$ (here we determine the value of $\lambda$). It then follows that 
\begin{equation}\label{eq10}
x_n{}^{\lambda_0(n+\alpha-\gamma)+n-1-\beta+\gamma}\geqslant l{}^{\lambda_0(n+\alpha-\gamma)+n-1-\beta+\gamma}.
\end{equation}
Moreover, we obtain
\begin{equation*}
1-(1+r^2N^{-2}l^{-2})\lambda_0=\left(1-\lambda_0-\frac{1}{N^2}\big(\frac{r}{l}\big)^2\cdot\lambda_0\right)\geqslant 1-\lambda_0-\frac{1}{N^2}
\end{equation*}
as a result of $\lambda_0<1$ and $\frac{r}{l}<1$. Let us take $N=N_0:=\sqrt{\frac{2}{1-\lambda_0}}$ (here we determine the value of $N$). Then we derive 
\begin{equation}\label{eq11}
	1-(1+r^2N_0^{-2}l^{-2})\lambda_0\geqslant \frac{1-\lambda_0}{2}>0.
\end{equation}
In view of $$N_0^2l^2-r^2\in [(N_0^2-1)l^2,N_0^2l^2], $$
there exists a constant  $C_2>0$ such that 
\begin{equation}\label{eq12}
(N_0^2l^2-r^2)^{\frac{\alpha-\gamma-n}{2}}\geqslant C_2>0.
\end{equation}
Substituting \eqref{eq10}, \eqref{eq11} and \eqref{eq12} back to \eqref{eq13}, we deduce that 
\begin{align*}
	&\ \ \ \ \det D^2W\cdot[f(x,W,\nabla W)]^{-1}\\
	&\geqslant 
	A^{-1}C_1(\lambda_0, M,N_0,l,\gamma)\lambda_0  M^{n+\alpha-\gamma}N_0^2l^2\cdot l{}^{\lambda_0(n+\alpha-\gamma)+n-1-\beta+\gamma} \cdot\frac{1-\lambda_0}{2}\cdot C_2\\
	&=A^{-1}C_1(\lambda_0, M,N_0,l,\gamma)C_2\cdot\frac{\lambda_0(1-\lambda_0)}{2}\cdot M^{n+\alpha-\gamma}N_0^2\cdot l{}^{\lambda_0(n+\alpha-\gamma)+n+1-\beta+\gamma}.
\end{align*}
Noticing 
$$\lim_{M\to+\infty}C_1(\lambda_0, M,N_0,l,\gamma)>0,\ \ \lim_{M\to+\infty}M^{n+\alpha-\gamma}=+\infty,$$
we obtain that 
there exists a constant  $M_0>0$ such that 
\[A^{-1}C_1(\lambda_0, M_0,N_0,l,\gamma)C_2\cdot\frac{\lambda_0(1-\lambda_0)}{2}\cdot M_0^{n+\alpha-\gamma}N_0^2\cdot l{}^{\lambda_0(n+\alpha-\gamma)+n+1-\beta+\gamma}> 1.\]
It implies that taking $M=M_0$ (here we determine the value of $M$) finally yields 
\[\det D^2W\cdot[f(x,W,\nabla W)]^{-1}> 1.\]

Up to now, we have determined $\lambda=\lambda_0$, $N=N_0$ and $M=M_0$, and hence 
\begin{equation}\label{determine W}
W(x)=	W(r, x_{n})=-M_0x_n^{\lambda_0}\cdot(N_0^2l^2-r^2)^{\frac{1}{2}},
\end{equation}
which now clearly satisfies $W\in C^2(\Omega)\cap C(\overline{\Omega})$, $W(0)=0$ and 
\begin{equation*}\begin{split}
		\det D^2 W&> f(x,W,\nabla W)\text{\hspace{0.2cm} in } \Omega,\\
		W&\leqslant  0\text{\hspace{0.2cm} on } \partial\Omega.
\end{split}\end{equation*}
Thus $W$ is a classical subsolution to \eqref{equation}  over $\Omega$, and therefore by Lemma \ref{lemma:equiv}, $W$ is a viscosity subsolution to \eqref{equation}  over $\Omega$. The proof of the lemma is now complete. 
\end{proof}
 
\section{Construction of function $u$}
 \label{Sec:u}

Consider the following set of functions:
\[S=\{v:\ v \text{ is a viscosity subsolution to \eqref{equation}  over $\Omega$ and }v\big|_{\partial\Omega}\leqslant 0\}.\]
By Lemma \ref{lemma2}, there exists a function $W$ such that it is  a viscosity subsolution to \eqref{equation}  over $\Omega$ and $W\big|_{\partial\Omega}\leqslant 0$ (for example  \eqref{determine W} suffices). This means $W\in S$ and hence $S\neq \emptyset$.

Now we define 
\begin{equation}\label{defineu}
u=\sup_{v\in S}v.
\end{equation}
In what follows, we   derive the  properties of $u$. 

\begin{Property}\label{property1} $u\big|_{\partial\Omega}=0$. 
\end{Property}
\begin{proof}
	On one hand, for any $v\in S$, we have $v\big|_{\partial \Omega}\leqslant 0$, which implies $u\big|_{\partial \Omega}\leqslant 0$.
	
	On the other hand, for any $x_0\in \partial \Omega$,  Lemma \ref{lemma2} ensures the existence of $W$ such that $W\in S$ and  $W(x_0)=0$. It follows that $u(x_0)=\sup_{v\in S}v(x_0)\geqslant W(x_0)=0$. Since $x_0$ is arbitrary, we have $u\big|_{\partial\Omega}\geqslant 0$.
	
	The above two aspects lead us to $u\big|_{\partial \Omega}= 0$.
\end{proof}

\begin{Property}\label{property2} $u$ is a convex function over $\overline{\Omega}$. 
\end{Property}
\begin{proof}
For any $x_0,y_0\in\overline{\Omega}$ and $z_0=t_0x_0+(1-t_0)y_0$ where $t_0\in(0,1)$, by \eqref{defineu}, we know $u(z_0)=\sup_{v\in S}v(z_0)$. Then there exists a sequence $\{v_k\}\subset S$ such that $\lim_{k\to\infty}v_k(z_0)=u(z_0)$. Since $v_k$ is convex, there holds
$v_k(z_0)\leqslant t_0v_k(x_0)+(1-t_0)v_k(y_0)$. By definition of supremum, we have 
$v_k(x_0)\leqslant u(x_0)$ and $v_k(y_0)\leqslant u(y_0)$. It follows that 
\begin{align*}
u(z_0)&=\lim_{k\to\infty}v_k(z_0)=\varlimsup_{k\to\infty}v_k(z_0)\\
&\leqslant \varlimsup_{k\to\infty}\left(t_0v_k(x_0)+(1-t_0)v_k(y_0)\right)\\
&\leqslant 
 t_0u(x_0)+(1-t_0)u(y_0),
\end{align*}
which implies that $u$ is a convex function over $\overline{\Omega}$. 
\end{proof}

\begin{Property}\label{property3} $u\in C(\overline{\Omega})$.
\end{Property}
\begin{proof}
By Property \ref{property2} and the openness of $\Omega$, we have $u\in C(\Omega)$. It remains to prove that for any $x_0\in\partial\Omega$, $u$ is continuous at $x_0$. By some translations and rotations, we can assume  $\overline{\Omega}\subset\mathbb{R}^n_+$ and $x_0=0$. Using \eqref{determine W} in Lemma \ref{lemma2}, we obtain that $W(x)=-M_0x_n^{\lambda_0}\cdot(N_0^2l^2-r^2)^{\frac{1}{2}}$ satisfies $W\in S$ and $W(0)=0$. By \eqref{defineu}, there holds $u(x)=\sup_{v\in S}v(x)\geqslant W(x)$. By Property \ref{property1} and Property \ref{property2}, there holds $u(x)\leqslant 0$ for any $x\in\Omega$. Hence we have 
\[0\geqslant u(x)\geqslant W(x)=-M_0x_n^{\lambda_0}\cdot(N_0^2l^2-r^2)^{\frac{1}{2}}.\]
This forces
\[|u(x)-u(0)|=|u(x)|\leqslant M_0x_n^{\lambda_0}\cdot(N_0^2l^2-r^2)^{\frac{1}{2}}\to 0\ \ \text{as }x\to 0, \]
i.e. $u$ is contiunous at $0$. Thus we have proved  $u\in C(\overline{\Omega})$.
\end{proof}

\begin{Property}\label{property4}
	$u<0$ in $\Omega$. 
\end{Property}
\begin{proof}
Take any $B_{r_0}(x_0)\subset\subset \Omega$, where $B_{r_0}(x_0)$ represents a ball with center  $x_0$ and radius $r_0$. Consider 
\[\phi_0(x)=-\frac{r_0^2}{2}+\frac{|x-x_0|^2}{2}.\]
Clearly, 
 $\nabla^2\phi_0=\mathrm{I}_n$ is the $n$-order identity matrix and then $\det D^2\phi_0=1$. For any $x\in\overline{B_{r_0}(x_0)}$ and $\epsilon\in (0,1]$, we note that
\[|\nabla(\epsilon\phi_0)|\leqslant|\nabla\phi_0|\leqslant r_0,\ \ 0\geqslant\epsilon\phi_0\geqslant\phi_0\geqslant-\frac{r_0^2}{2}.\]
By the structure condition $(f_2)$, it follows that 
\begin{equation*}
f(x,\epsilon\phi_0,\nabla(\epsilon\phi_0))\geqslant f(x,-\frac{r_0^2}{2},\nabla(\epsilon\phi_0))\geqslant\min_{(x,z,q)\in K_0}f(x,z,q),
\end{equation*}
where 
\[K_0=\overline{B_{r_0}(x_0)}\times\Big\{-\frac{r_0^2}{2}\Big\}\times\overline{B_{r_0}(0)}\subset\Omega\times(-\infty,0)\times\mathbb{R}^n.\]
It is trivial that $K_0$ is compact, and the structure condition  $(f_1)$ gives $f(x,z,q)\in C(K_0)$. By the structure condition $(f_3)$, we have $f(x,z,q)>0$ for any $(x,z,q)\in K_0$. Consequently, there holds $$\eta_0:=\min_{(x,z,q)\in K_0}f(x,z,q)>0.$$
Now we fix $\epsilon_0\in (0,\min\{1,\eta_0^{\frac{1}{n}}\})$. Then  we have
\begin{equation}\label{eq14}
\det D^2(\epsilon_0\phi_0)=\epsilon_0^n\det D^2\phi_0=\epsilon_0^n<\eta_0\leqslant f(x,\epsilon_0\phi_0,\nabla(\epsilon_0\phi_0)),\ \ \forall x\in B_{r_0}(x_0).
\end{equation}

For any $v\in S$, we have $v\big|_{\partial\Omega}\leqslant 0$ and $v$ is convex over $\Omega$, which implies that $v(x)\leqslant v\big|_{\partial\Omega}\leqslant 0$ for any $x\in\Omega$. 
Together with $B_{r_0}(x_0)\subset\subset \Omega$, this gives $v\big|_{\partial B_{r_0}(x_0)}\leqslant 0$. We also note that $\phi_0\big|_{\partial B_{r_0}(x_0)}=0$. As a result, we have
\begin{equation}\label{eq15}
v\big|_{\partial B_{r_0}(x_0)}\leqslant \epsilon_0\phi_0\big|_{\partial B_{r_0}(x_0)}=0.
\end{equation}

According to  \eqref{eq14}, \eqref{eq15} and Lemma \ref{lemma1} (i) (comparison principle for the viscosity subsolution $v$), we derive 
\[v\leqslant\epsilon_0\phi_0,\ \ \forall x\in \overline{B_{r_0}(x_0)},\]
which yields
\[u=\sup_{v\in S}v\leqslant\epsilon_0\phi_0,\ \ \forall x\in \overline{B_{r_0}(x_0)}.\]
Thus there holds
\[u(x_0)=\sup_{v\in S}v(x_0)\leqslant\epsilon_0\phi_0(x_0)=-\frac{\epsilon_0r_0^2}{2}<0,\]
and therefore
\begin{equation}\label{eq16}
u\not\equiv 0\ \ \text{in }\Omega.
\end{equation}

Together with Property \ref{property1} and Property \ref{property2}, this result implies $u(x)<0$ for any $x\in\Omega$. In fact, if not, then there exists $x_0\in\Omega$ such that $u(x_0)\geqslant0$, and hence using Property \ref{property1} and Property \ref{property2} gives $u(x)\equiv0$ for any $x\in\Omega$, which contradicts \eqref{eq16}. The proof is now complete. 
\end{proof}

\section{Existence of viscosity solution}\label{Sec:EV}
Let us prove  Theorem \ref{thm1} in this section.  
Based on the  four properties in the last section, we will show that $u$ constructed in  \eqref{defineu} is a viscosity solution to \eqref{equation} over $\Omega$, which will immediately yield that $u$ is a viscosity solution to the problem \eqref{equation}-\eqref{bcondition} over $\Omega$. The proof is divided into the following two steps.

\subsection*{Step 1:}
We show that $u$ constructed in  \eqref{defineu} is a viscosity subsolution to \eqref{equation}  over $\Omega$.

The proof is by contradiction. Suppose not, then by Definition \ref{def:vis} (i), there exist a point $x_0\in\Omega$, an open neighborhood $U_0(x_0)$ and a convex function $\overline{\phi_0}\in C^2(U_0(x_0)\cap\Omega)$ 
satisfying
\[(\overline{\phi_0}-u)(x)\geqslant (\overline{\phi_0}-u)(x_0),\ \ \forall x\in U_0(x_0)\cap\Omega\]
such that
\begin{equation}\label{eq17}
\det D^2\overline{\phi_0}(x_0)< f(x_0,u(x_0),\nabla \overline{\phi_0}(x_0)).
\end{equation}

\smallskip

\noindent
\textbf{Step 1-a}: Construction of $\overline{\phi_1}$.

Denote 
\[\overline{\phi_1}(x)=\overline{\phi_0}(x)-\overline{\phi_0}(x_0)+u(x_0),\ \ \forall x\in U_0(x_0)\cap\Omega.\]
It is clear that $\overline{\phi_1}\in C^2(U_0(x_0)\cap\Omega)$  is convex and 
\begin{equation}\label{eq21}
\overline{\phi_1}(x)\geqslant u(x),\ \ \forall x\in U_0(x_0)\cap\Omega.
\end{equation}
By the definition of $\overline{\phi_1}$ and Property \ref{property4}, 
we also note that 
\[\overline{\phi_1}(x_0)=u(x_0)<0,\ \ \nabla\overline{\phi_1}(x_0)=\nabla\overline{\phi_0}(x_0),\ \ \nabla^2\overline{\phi_1}(x_0)=\nabla^2\overline{\phi_0}(x_0).\]
Then by \eqref{eq17}, 
we obtain 
\begin{equation*}
	\det D^2\overline{\phi_1}(x_0)< f(x_0,\overline{\phi_1}(x_0),\nabla \overline{\phi_1}(x_0)),
\end{equation*}
and therefore there exists a constant  $\overline{\zeta_0}>0$ such that 
\begin{equation}\label{eq18}
	\det D^2\overline{\phi_1}(x_0)- f(x_0,\overline{\phi_1}(x_0),\nabla \overline{\phi_1}(x_0))=-\overline{\zeta_0}<0.
\end{equation}

\smallskip

\noindent
\textbf{Step 1-b}: Construction of $\overline{\phi_2}$.

Let
\[\overline{\phi_2}(x)=\overline{\phi_1}(x)+\frac{\overline{\varepsilon}|x-x_0|^2}{2},\ \ \forall x\in U_0(x_0)\cap\Omega,\]
where $\overline{\varepsilon}\in (0,1)$ is a constant to be determined later. 
It follows that $\overline{\phi_2}\in C^2(U_0(x_0)\cap\Omega)$  is convex, 
and 
\[\overline{\phi_2}(x)\geqslant\overline{\phi_1}(x)\geqslant u(x),\ \ \forall x\in U_0(x_0)\cap\Omega.\]
We also notice that 
\[\overline{\phi_2}(x_0)=\overline{\phi_1}(x_0)=u(x_0)<0,\ \ \nabla\overline{\phi_2}(x_0)=\nabla\overline{\phi_1}(x_0),\ \ \nabla^2\overline{\phi_2}(x_0)=\nabla^2\overline{\phi_1}(x_0)+\overline{\varepsilon} \mathrm{I}_n.\]
According to \eqref{eq18}, we then derive
\begin{align*}
&\ \ \ \ \det D^2\overline{\phi_2}(x_0)- f(x_0,\overline{\phi_2}(x_0),\nabla \overline{\phi_2}(x_0))\\
&=\big(\det D^2\overline{\phi_2}(x_0)-\det D^2\overline{\phi_1}(x_0)\big)+\big(\det D^2\overline{\phi_1}(x_0)- f(x_0,\overline{\phi_1}(x_0),\nabla \overline{\phi_1}(x_0))\big)\\
&=\left(\det \big(D^2\overline{\phi_1}(x_0)+\overline{\varepsilon} \mathrm{I}_n\big)-\det D^2\overline{\phi_1}(x_0)\right)-\overline{\zeta_0}\\
&\to-\overline{\zeta_0}\ \ \text{as }\overline{\varepsilon}\to 0. 
\end{align*}
Thus there exists a sufficiently small constant $\overline{\varepsilon_0}\in(0,1)$ such that when taking $\overline{\varepsilon}=\overline{\varepsilon_0}$ (here we determine the value of $\overline{\varepsilon}$), i.e. 
\[\overline{\phi_2}(x)=\overline{\phi_1}(x)+\frac{\overline{\varepsilon_0}|x-x_0|^2}{2},\ \ \forall x\in U_0(x_0)\cap\Omega,\]
 we have 
\begin{equation}\label{eq19}
	\det D^2\overline{\phi_2}(x_0)- f(x_0,\overline{\phi_2}(x_0),\nabla \overline{\phi_2}(x_0))\leqslant-\frac{\overline{\zeta_0}}{2}<0.
\end{equation}

Denote 
\[F(x,z,q,A)=\det A-f(x,z,q),\ \ \forall (x,z,q,A)\in\Omega\times(-\infty,0)\times\mathbb{R}^n\times\mathbb{R}^{n^2},\]
where 
 $A$ is a 
 symmetric matrix.
We recall that 
\[\det A=\det (A_{ij})=\sum_{j_1 j_2 \cdots j_n}(-1)^{\tau(j_1 j_2 \cdots j_n)}A_{1j_1}A_{2j_2}\cdots A_{nj_n}\]
is a multivariable polynomial function of all $n^2$ variables $A_{ij}$ with $i,j\in\{1,2,\cdots,n\}$, 
where $j_1j_2\cdots j_n$ is an arrangement of numbers $1,2,\cdots,n$, and $\tau(j_1 j_2 \cdots j_n)$ is an inverse number of $j_1j_2\cdots j_n$. It is known that the Taylor expansion of any polynomial function is itself, then any polynomial function is analytic and hence continuous. This implies
$\det A\in C(\mathbb{R}^{n^2})$. 
Combined with the structure condition $(f_1)$, it then gives 
\[F(x,z,q,A)\in C(\Omega\times(-\infty,0)\times\mathbb{R}^n\times\mathbb{R}^{n^2}).\]
In particular, $F(x,z,q,A)$ is continuous at $\big(x_0,\overline{\phi_2}(x_0),\nabla\overline{\phi_2}(x_0),\nabla^2\overline{\phi_2}(x_0)\big)$. 
Thus there exists a sufficiently small constant $\overline{\delta_0}\in(0,|u(x_0)|)$ such that when 
\[\|x-x_0\|_\infty\leqslant\overline{\delta_0},\ \ \|z-\overline{\phi_2}(x_0)\|_\infty\leqslant\overline{\delta_0},\ \ \|q-\nabla\overline{\phi_2}(x_0)\|_\infty\leqslant\overline{\delta_0},\ \ \|A-\nabla^2\overline{\phi_2}(x_0)\|_\infty\leqslant\overline{\delta_0},\]
we have that the matrix $A$ is positive definite and 
\[\big|F(x,z,q,A)-F\big(x_0,\overline{\phi_2}(x_0),\nabla\overline{\phi_2}(x_0),\nabla^2\overline{\phi_2}(x_0)\big)\big|<\frac{\overline{\zeta_0}}{4}.\]
Together with \eqref{eq19} and the definition of $F$, this implies
\[F(x,z,q,A)<F\big(x_0,\overline{\phi_2}(x_0),\nabla\overline{\phi_2}(x_0),\nabla^2\overline{\phi_2}(x_0)\big)+\frac{\overline{\zeta_0}}{4}\leqslant-\frac{\overline{\zeta_0}}{2}+\frac{\overline{\zeta_0}}{4}=-\frac{\overline{\zeta_0}}{4},\]
i.e.
\begin{equation}\label{eq20}
\det A-f(x,z,q)\leqslant-\frac{\overline{\zeta_0}}{4}<0.
\end{equation}

By virtue of $\overline{\phi_2}\in C^2(U_0(x_0)\cap\Omega)$, we note that there exists a sufficiently small constant $\overline{\delta_0}'\in(0,\overline{\delta_0}]$ such that when $$\|x-x_0\|_\infty\leqslant\overline{\delta_0}',$$
we have 
\begin{equation}\label{eq22}
\|\overline{\phi_2}(x)-\overline{\phi_2}(x_0)\|_\infty\leqslant\frac{\overline{\delta_0}}{2},\ \ \|\nabla\overline{\phi_2}(x)-\nabla\overline{\phi_2}(x_0)\|_\infty\leqslant\frac{\overline{\delta_0}}{2},\ \ \|\nabla^2\overline{\phi_2}(x)-\nabla^2\overline{\phi_2}(x_0)\|_\infty\leqslant\frac{\overline{\delta_0}}{2}.
\end{equation}
At the same time, we can assume $B(x_0,\overline{\delta_0}')\subset\subset U_0(x_0)\cap\Omega$ (here we can shrink the value of $\overline{\delta_0}'$ if necessary).

\smallskip

\noindent
\textbf{Step 1-c}: Construction of $\overline{\phi_3}$.

We take \[\overline{h_0}=\min\Big\{\frac{\overline{\delta_0}}{2},\frac{\overline{\varepsilon_0}\overline{\delta_0}'^2}{2}\Big\}>0,\]
and denote 
\[\overline{\phi_3}(x)=\overline{\phi_2}(x)-\overline{h_0},\ \ \forall x\in U_0(x_0)\cap\Omega.\]
It is clear that $\overline{\phi_3}\in C^2(U_0(x_0)\cap\Omega)$  is convex. 
For any $x\in  B(x_0,\overline{\delta_0}')$, by virtue of $\|x-x_0\|_\infty\leqslant\overline{\delta_0}'$ and \eqref{eq22}, there holds 
\begin{equation*}
	|\overline{\phi_3}(x)-\overline{\phi_2}(x_0)|=|\overline{\phi_2}(x)-\overline{h_0}-\overline{\phi_2}(x_0)|\leqslant |\overline{\phi_2}(x)-\overline{\phi_2}(x_0)|+\overline{h_0}\leqslant\frac{\overline{\delta_0}}{2}+\frac{\overline{\delta_0}}{2}=\overline{\delta_0}.
\end{equation*}
We also note that 
\[\nabla\overline{\phi_3}=\nabla\overline{\phi_2},\ \ \nabla^2\overline{\phi_3}=\nabla^2\overline{\phi_2}.\]
In this way, for any $x\in  B(x_0,\overline{\delta_0}')$, by virtue of $\|x-x_0\|_\infty\leqslant\overline{\delta_0}'$, $\overline{\delta_0}'\in(0,\overline{\delta_0}]$ and  \eqref{eq22}, we have 
\begin{equation*}
	\|x-x_0\|_\infty\leqslant\overline{\delta_0},\ \ 	\|\overline{\phi_3}(x)-\overline{\phi_2}(x_0)\|_\infty\leqslant\overline{\delta_0},\ \ \|\nabla\overline{\phi_3}(x)-\nabla\overline{\phi_2}(x_0)\|_\infty\leqslant\overline{\delta_0},\ \ \|\nabla^2\overline{\phi_3}(x)-\nabla^2\overline{\phi_2}(x_0)\|_\infty\leqslant \overline{\delta_0},
\end{equation*}
where the last inequality also implies that the Hessian matrix $\nabla^2\overline{\phi_3}$ is bounded and  positive definite  (we can choose $\overline{\delta_0}$ to be smaller if necessary). 
Up to now, we have verified that $\overline{\phi_3}$ satisfies all conditions proposed for $F(x,z,q,A)$ in \textbf{Step 1-b}. 
It then follows from \eqref{eq20} that 
\begin{equation}\label{eq25}
	\det D^2\overline{\phi_3}-f(x,\overline{\phi_3},\nabla \overline{\phi_3})\leqslant-\frac{\overline{\zeta_0}}{4}<0,\ \ \forall x\in  B(x_0,\overline{\delta_0}').
\end{equation}

Combining  \eqref{eq25} and Definition \ref{def:class} (ii), we obtain that $\overline{\phi_3}$ is a classical supersolution over $B(x_0,\overline{\delta_0}')$.

\smallskip

\noindent
\textbf{Step 1-d}: Relation of $\overline{\phi_3}$ and $u$. 

Consider 
\[U_1=\{x\in U_0(x_0)\cap\Omega:\ \overline{\phi_3}(x)<u(x)\}.\]
For any $x\in U_1$, by the definition of $\overline{\phi_3}(x)$, we infer that
\[\frac{\overline{\varepsilon_0}|x-x_0|^2}{2}<\overline{h_0}+(u(x)-\overline{\phi_1}(x)).\]
Using \eqref{eq21} 
and the fact that  $U_1\subset U_0(x_0)\cap\Omega$, we have $\frac{\overline{\varepsilon_0}}{2}|x-x_0|^2<\overline{h_0}$
 and therefore 
 \begin{equation*}
 	\|x-x_0\|_\infty<\Big(\frac{2}{\overline{\varepsilon_0}}\overline{h_0}\Big)^{\frac{1}{2}}\leqslant\Big(\frac{2}{\overline{\varepsilon_0}}\cdot\frac{\overline{\varepsilon_0}\overline{\delta_0}'^2}{2}\Big)^{\frac{1}{2}}=\overline{\delta_0}',
 \end{equation*}
 which implies $x\in B(x_0,\overline{\delta_0}')$. Thus there holds
 \[U_1\subset B(x_0,\overline{\delta_0}')\subset\subset U_0(x_0)\cap\Omega.\]
 Since $B(x_0,\overline{\delta_0}')$ is an open ball, then for any $x\in \partial B(x_0,\overline{\delta_0}')$, we have $x\notin B(x_0,\overline{\delta_0}')$, which yields $x\notin U_1$ and further  $\overline{\phi_3}(x)\geqslant u(x)$. As a consequence, we get
 \begin{equation}\label{eq23}
 	u\big|_{\partial B(x_0,\overline{\delta_0}')}\leqslant\overline{\phi_3}\big|_{\partial B(x_0,\overline{\delta_0}')}.
 \end{equation} 
  
For any $x\in \partial B(x_0,\overline{\delta_0}')$, using  $\|x-x_0\|_\infty\leqslant\overline{\delta_0}'$ and  \eqref{eq22}, we have 
$\|\overline{\phi_2}(x)-\overline{\phi_2}(x_0)\|_\infty\leqslant\frac{\overline{\delta_0}}{2}$ and thus $\overline{\phi_2}(x)\leqslant\overline{\phi_2}(x_0)+\frac{\overline{\delta_0}}{2}$. By noting $\overline{\phi_2}(x_0)=u(x_0)<0$ and  $\overline{\delta_0}<|u(x_0)|$, we derive 
\[\overline{\phi_2}(x)\leqslant u(x_0)+\frac{|u(x_0)|}{2}=u(x_0)-\frac{u(x_0)}{2}=\frac{u(x_0)}{2}<0,\ \ \forall x\in \partial B(x_0,\overline{\delta_0}').\]
This leads us to 
\[\overline{\phi_2}\big|_{\partial B(x_0,\overline{\delta_0}')}< 0,\]
and hence
\[\overline{\phi_3}\big|_{\partial B(x_0,\overline{\delta_0}')}=(\overline{\phi_2}-\overline{h_0})\big|_{\partial B(x_0,\overline{\delta_0}')}< 0.\]
Together with \eqref{eq23}, this yields
\begin{equation}\label{eq24}
	u\big|_{\partial B(x_0,\overline{\delta_0}')}\leqslant\overline{\phi_3}\big|_{\partial B(x_0,\overline{\delta_0}')}<0.
\end{equation}

\smallskip

\noindent
\textbf{Step 1-e}: Leading to  contradiction. 

We notice that for any $v\in S$, $v$ is also a viscosity subsolution to \eqref{equation} over $B(x_0,\overline{\delta_0}')$, and moreover by \eqref{defineu} and \eqref{eq24}, $v$ satisfies 
\begin{equation}\label{eq26}
v\big|_{\partial B(x_0,\overline{\delta_0}')}\leqslant	u\big|_{\partial B(x_0,\overline{\delta_0}')}\leqslant\overline{\phi_3}\big|_{\partial B(x_0,\overline{\delta_0}')}<0.
\end{equation}
According to \eqref{eq25}, \eqref{eq26} and Lemma \ref{lemma1} (i) (comparison principle for the viscosity subsolution $v$), we acquire 
\begin{equation*}
v\leqslant \overline{\phi_3},\ \ \forall x\in\overline{B(x_0,\overline{\delta_0}')}.
\end{equation*}
Therefore by \eqref{defineu}, we obtain 
\begin{equation*}
	u=\sup_{v\in S}v\leqslant \overline{\phi_3},\ \ \forall x\in\overline{B(x_0,\overline{\delta_0}')}.
\end{equation*}
In particular, we take $x=x_0$ to get 
\begin{equation*}
u(x_0)\leqslant\overline{\phi_3}(x_0)=\overline{\phi_2}(x_0)-\overline{h_0}=u(x_0)-\overline{h_0}.
\end{equation*}
This is a contradiction to $\overline{h_0}>0$. 

Thus we arrive at the conclusion that $u$ constructed in  \eqref{defineu} is a viscosity subsolution to \eqref{equation}  over $\Omega$, which completes the \textbf{Step 1}.

\subsection*{Step 2:} We show that $u$ constructed in  \eqref{defineu} is a viscosity supersolution to \eqref{equation}  over $\Omega$.

The proof is by contradiction. Suppose not, then by Definition \ref{def:vis} (ii), there exist a point $x_0\in\Omega$, an open neighborhood $U_0(x_0)$ and a convex function $\underline{\phi_0}\in C^2(U_0(x_0)\cap\Omega)$ 
satisfying
\[(\underline{\phi_0}-u)(x)\leqslant (\underline{\phi_0}-u)(x_0),\ \ \forall x\in U_0(x_0)\cap\Omega\]
such that
\begin{equation}\label{eq27}
	\det D^2\underline{\phi_0}(x_0)> f(x_0,u(x_0),\nabla \underline{\phi_0}(x_0)).
\end{equation}

\smallskip

\noindent
\textbf{Step 2-a}: Construction of $\underline{\phi_1}$.

Denote 
\[\underline{\phi_1}(x)=\underline{\phi_0}(x)-\underline{\phi_0}(x_0)+u(x_0),\ \ \forall x\in U_0(x_0)\cap\Omega.\]

On one hand, based on
the fact that $\underline{\phi_0}\in C^2(U_0(x_0)\cap\Omega)$ is convex, we note that $\underline{\phi_1}\in C^2(U_0(x_0)\cap\Omega)$ is convex and hence the Hessian matrix $D^2\underline{\phi_1}(x_0)$ is bounded and positive semi-definite. It further implies that all $n$ eigenvalues of  $D^2\underline{\phi_1}(x_0)$, denoted as $\underline{\lambda_1},\underline{\lambda_2},\cdots,\underline{\lambda_n}$, are bounded,  i.e. there exists a constant $M_0>0$ such that
\begin{equation}\label{eq28}
	0\leqslant\underline{\lambda_1},\underline{\lambda_2},\cdots,\underline{\lambda_n}\leqslant M_0.
\end{equation}
Then all $n$ eigenvalues of the matrix $D^2\underline{\phi_1}(x_0)-M_0\mathrm{I}_n$ are  $\underline{\lambda_1}-M_0,\underline{\lambda_2}-M_0,\cdots,\underline{\lambda_n}-M_0$ and satisfy
\[\underline{\lambda_1}-M_0\leqslant 0,\ \ \underline{\lambda_2}-M_0\leqslant 0,\ \ \cdots,\ \ \underline{\lambda_n}-M_0\leqslant 0.\]
This means that the matrix $D^2\underline{\phi_1}(x_0)-M_0\mathrm{I}_n$ is negative semi-definite and therefore
\begin{equation}\label{eq39}
	D^2\underline{\phi_1}(x_0)\leqslant M_0\mathrm{I}_n.
\end{equation}

On the other hand, by the definition of $\underline{\phi_1}$, 
it immediately holds that  
\begin{equation}\label{eq36}
	\underline{\phi_1}(x)\leqslant u(x),\ \ \forall x\in U_0(x_0)\cap\Omega.
\end{equation}
By the definition of $\underline{\phi_1}$ and Property \ref{property4}, 
we also note that 
\[\underline{\phi_1}(x_0)=u(x_0)<0,\ \ \nabla\underline{\phi_1}(x_0)=\nabla\underline{\phi_0}(x_0),\ \ \nabla^2\underline{\phi_1}(x_0)=\nabla^2\underline{\phi_0}(x_0).\]
Then by \eqref{eq27}, 
we obtain 
\begin{equation*}
	\det D^2\underline{\phi_1}(x_0)> f(x_0,\underline{\phi_1}(x_0),\nabla \underline{\phi_1}(x_0)),
\end{equation*}
which means that there exists a constant  $\underline{\zeta_0}>0$ such that 
\begin{equation}\label{eq29}
	\det D^2\underline{\phi_1}(x_0)- f(x_0,\underline{\phi_1}(x_0),\nabla \underline{\phi_1}(x_0))=\underline{\zeta_0}>0.
\end{equation}
Thus using the structure condition $(f_3)$, we have 
\begin{equation*}
	\det D^2\underline{\phi_1}(x_0)= f(x_0,\underline{\phi_1}(x_0),\nabla \underline{\phi_1}(x_0))+\underline{\zeta_0}\geqslant\underline{\zeta_0}>0.
\end{equation*}
It follows that 
\[\underline{\lambda_1}\,\underline{\lambda_2}\cdots\underline{\lambda_n}\geqslant\underline{\zeta_0}.\]
Combined with \eqref{eq28}, this yields for any $j\in\{1,2,\cdots,n\}$ that
\[\underline{\lambda_j}=\frac{\underline{\lambda_1}\,\underline{\lambda_2}\cdots\underline{\lambda_n}}{\underline{\lambda_1}\,\underline{\lambda_2}\cdots\underline{\lambda_{j-1}}\,\widehat{\underline{\lambda_j}}\,\underline{\lambda_{j+1}}\cdots\underline{\lambda_n}}\geqslant\frac{\underline{\zeta_0}}{M_0^{n-1}}>0,\]
which implies that 
there exists a constant $m_0>0$ (for example  $m_0=\frac{\underline{\zeta_0}}{M_0^{n-1}}$ suffices) such that 
\[\underline{\lambda_1},\underline{\lambda_2},\cdots,\underline{\lambda_n}\geqslant m_0.\]
Then all $n$ eigenvalues of the matrix $D^2\underline{\phi_1}-m_0\mathrm{I}_n$ are  $\underline{\lambda_1}-m_0,\underline{\lambda_2}-m_0,\cdots,\underline{\lambda_n}-m_0$ and satisfy
\[\underline{\lambda_1}-m_0\geqslant 0,\ \ \underline{\lambda_2}-m_0\geqslant 0,\ \ \cdots,\ \ \underline{\lambda_n}-m_0\geqslant 0.\]
This means that the matrix $D^2\underline{\phi_1}(x_0)-m_0\mathrm{I}_n$ is positive semi-definite and therefore
\begin{equation}\label{eq30}
	D^2\underline{\phi_1}(x_0)\geqslant m_0\mathrm{I}_n.
\end{equation}

\smallskip

\noindent
\textbf{Step 2-b}: Construction of $\underline{\phi_2}$.

Let
\[\underline{\phi_2}(x)=\underline{\phi_1}(x)-\frac{\underline{\varepsilon}|x-x_0|^2}{2},\ \ \forall x\in U_0(x_0)\cap\Omega,\]
where $\underline{\varepsilon}\in (0,\frac{m_0}{2})$ is a constant to be determined later. Clearly, $\underline{\phi_2}\in C^2(U_0(x_0)\cap\Omega)$ 
 satisfies
\[\underline{\phi_2}(x_0)=\underline{\phi_1}(x_0)=u(x_0)<0,\ \ \nabla\underline{\phi_2}(x_0)=\nabla\underline{\phi_1}(x_0),\ \ \nabla^2\underline{\phi_2}(x_0)=\nabla^2\underline{\phi_1}(x_0)-\underline{\varepsilon} \mathrm{I}_n.\]
Then we also note that
\[\lim_{\underline{\varepsilon}\to 0}\det D^2\underline{\phi_2}(x_0)=\det D^2\underline{\phi_1}(x_0),\]
and 
\[f(x_0,\underline{\phi_2}(x_0),\nabla \underline{\phi_2}(x_0))=f(x_0,\underline{\phi_1}(x_0),\nabla \underline{\phi_1}(x_0)).\]
As a result, based on \eqref{eq39}, \eqref{eq29} and \eqref{eq30}, there exists a sufficiently small constant $\underline{\varepsilon_0}\in(0,\frac{m_0}{2})$ such that when taking $\underline{\varepsilon}=\underline{\varepsilon_0}$ (here we determine the value of $\underline{\varepsilon}$), i.e. 
\[\underline{\phi_2}(x)=\underline{\phi_1}(x)-\frac{\underline{\varepsilon_0}|x-x_0|^2}{2},\ \ \forall x\in U_0(x_0)\cap\Omega,\]
we have 
\begin{equation}\label{eq31}
	\det D^2\underline{\phi_2}(x_0)- f(x_0,\underline{\phi_2}(x_0),\nabla \underline{\phi_2}(x_0))\geqslant\frac{\underline{\zeta_0}}{2}>0,
\end{equation}
and 
\begin{equation}\label{eq32}
\frac{m_0}{2}\mathrm{I}_n\leqslant	D^2\underline{\phi_2}(x_0)=D^2\underline{\phi_1}(x_0)-\varepsilon_0\mathrm{I}_n\leqslant M_0\mathrm{I}_n.
\end{equation}
Here, \eqref{eq32} shows that the Hessian matrix $\nabla^2\underline{\phi_2}(x_0)$ is bounded and positive definite. 

As in \textbf{Step 1-b}, we turn to study 
\[F(x,z,q,A)=\det A-f(x,z,q),\ \ \forall (x,z,q,A)\in\Omega\times(-\infty,0)\times\mathbb{R}^n\times\mathbb{R}^{n^2},\]
where $A$ is a symmetric matrix. 
Similar to above, we can also show that  $F(x,z,q,A)$ is continuous at $\big(x_0,\underline{\phi_2}(x_0),\nabla\underline{\phi_2}(x_0),\nabla^2\underline{\phi_2}(x_0)\big)$. 
Then, by \eqref{eq31} and the same argument used to derive \eqref{eq20},
 there exists a sufficiently small constant $\underline{\delta_0}\in(0,\min\{|u(x_0)|,\frac{m_0}{2}\})$ such that when 
\[\|x-x_0\|_\infty\leqslant\underline{\delta_0},\ \ \|z-\underline{\phi_2}(x_0)\|_\infty\leqslant\underline{\delta_0},\ \ \|q-\nabla\underline{\phi_2}(x_0)\|_\infty\leqslant\underline{\delta_0},\ \ \|A-\nabla^2\underline{\phi_2}(x_0)\|_\infty\leqslant\underline{\delta_0},\]
we have that  the matrix $A$ is positive definite, and 
\begin{equation}\label{eq33}
	\det A-f(x,z,q)\geqslant\frac{\underline{\zeta_0}}{4}>0.
\end{equation}

Since $\underline{\phi_2}\in C^2(U_0(x_0)\cap\Omega)$, we note that there exists a sufficiently small constant $\underline{\delta_0}'\in(0,\underline{\delta_0}]$ such that when $$\|x-x_0\|_\infty\leqslant\underline{\delta_0}',$$
we have 
\begin{equation}\label{eq34}
	\|\underline{\phi_2}(x)-\underline{\phi_2}(x_0)\|_\infty\leqslant\frac{\underline{\delta_0}}{2},\ \ \|\nabla\underline{\phi_2}(x)-\nabla\underline{\phi_2}(x_0)\|_\infty\leqslant\frac{\underline{\delta_0}}{2},\ \ \|\nabla^2\underline{\phi_2}(x)-\nabla^2\underline{\phi_2}(x_0)\|_\infty\leqslant\frac{\underline{\delta_0}}{2}.
\end{equation}
We can also assume $B(x_0,\underline{\delta_0}')\subset\subset U_0(x_0)\cap\Omega$ (here we can shrink the value of $\underline{\delta_0}'$ if necessary).

\smallskip

\noindent
\textbf{Step 2-c}: Construction of $\underline{\phi_3}$.

We take \[\underline{h_0}=\min\Big\{\frac{\underline{\delta_0}}{2},\frac{\underline{\varepsilon_0}\, \underline{\delta_0}'^2}{2}\Big\}>0,\]
and denote 
\[\underline{\phi_3}(x)=\underline{\phi_2}(x)+\underline{h_0},\ \ \forall x\in U_0(x_0)\cap\Omega.\]
It is trivial to see that $\underline{\phi_3}\in C^2(U_0(x_0)\cap\Omega)$. 
Moreover, by  \eqref{eq32}, we get 
\begin{equation*} 
	\frac{m_0}{2}\mathrm{I}_n\leqslant	D^2\underline{\phi_3}(x_0)=D^2\underline{\phi_2}(x_0)\leqslant M_0\mathrm{I}_n, 
\end{equation*}
which shows that the Hessian matrix $\nabla^2\underline{\phi_3}(x_0)$ is bounded and positive definite.  

For any $x\in  B(x_0,\underline{\delta_0}')$, on account of $\|x-x_0\|_\infty\leqslant\underline{\delta_0}'$ and \eqref{eq34}, there holds 
\begin{equation*}
	|\underline{\phi_3}(x)-\underline{\phi_2}(x_0)|=|\underline{\phi_2}(x)+\underline{h_0}-\underline{\phi_2}(x_0)|\leqslant |\underline{\phi_2}(x)-\underline{\phi_2}(x_0)|+\underline{h_0}\leqslant\frac{\underline{\delta_0}}{2}+\frac{\underline{\delta_0}}{2}=\underline{\delta_0}.
\end{equation*}
We also observe that 
\[\nabla\underline{\phi_3}=\nabla\underline{\phi_2},\ \ \nabla^2\underline{\phi_3}=\nabla^2\underline{\phi_2}.\]
Therefore, for any $x\in  B(x_0,\underline{\delta_0}')$, in view of $\|x-x_0\|_\infty\leqslant\underline{\delta_0}'$, $\underline{\delta_0}'\in(0,\underline{\delta_0}]$ and  \eqref{eq34}, we obtain
\begin{equation*}
	\|x-x_0\|_\infty\leqslant\underline{\delta_0},\ \ 	\|\underline{\phi_3}(x)-\underline{\phi_2}(x_0)\|_\infty\leqslant\underline{\delta_0},\ \ \|\nabla\underline{\phi_3}(x)-\nabla\underline{\phi_2}(x_0)\|_\infty\leqslant\underline{\delta_0},\ \ \|\nabla^2\underline{\phi_3}(x)-\nabla^2\underline{\phi_2}(x_0)\|_\infty\leqslant \underline{\delta_0},
\end{equation*}
where the last inequality also implies that the Hessian matrix $\nabla^2\underline{\phi_3}$ is bounded and  positive definite (we can choose $\underline{\delta_0}$ to be smaller if necessary). 
Until now, we have verified that $\underline{\phi_3}$ satisfies all conditions proposed for $F(x,z,q,A)$ in \textbf{Step 2-b}. 
It then follows from \eqref{eq33} that 
\begin{equation}\label{eq35}
	\det D^2\underline{\phi_3}-f(x,\underline{\phi_3},\nabla \underline{\phi_3})\geqslant\frac{\underline{\zeta_0}}{4}>0,\ \ \forall x\in  B(x_0,\underline{\delta_0}').
\end{equation}

Combining  \eqref{eq25} and Definition \ref{def:class} (i), we obtain that $\underline{\phi_3}$ is a classical subsolution over $B(x_0,\underline{\delta_0}')$.

\smallskip

\noindent
\textbf{Step 2-d}: Construction of $\underline{u}$. 

Let us denote
\[U_2=\{x\in U_0(x_0)\cap\Omega:\ \underline{\phi_3}(x)>u(x)\}.\]
We remark here that there holds  $\underline{\phi_3}=u$ on $\partial U_2$. 
For any $x\in U_2$, by the definition of $\underline{\phi_3}(x)$, we acquire
\[\frac{\underline{\varepsilon_0}|x-x_0|^2}{2}<\underline{h_0}+(\underline{\phi_1}(x)-u(x)).\]
Based on \eqref{eq36} 
and the fact that  $U_2\subset U_0(x_0)\cap\Omega$, we then derive $\frac{\underline{\varepsilon_0}}{2}|x-x_0|^2<\underline{h_0}$
and therefore 
\begin{equation*}
	\|x-x_0\|_\infty<\Big(\frac{2}{\underline{\varepsilon_0}}\underline{h_0}\Big)^{\frac{1}{2}}\leqslant\Big(\frac{2}{\underline{\varepsilon_0}}\cdot\frac{\underline{\varepsilon_0}\underline{\delta_0}'^2}{2}\Big)^{\frac{1}{2}}=\underline{\delta_0}',
\end{equation*}
which implies $x\in B(x_0,\underline{\delta_0}')$. Thus there holds
\begin{equation}\label{eq37}
U_2\subset B(x_0,\underline{\delta_0}')\subset\subset U_0(x_0)\cap\Omega.
\end{equation}

Now we consider
\begin{equation*}
\underline{u}=\begin{cases}
\underline{\phi_3}\ \ &\text{ if }x\in U_2,\\
u,\ \ &\text{ if }x\in\overline{\Omega}\setminus U_2.
\end{cases}
\end{equation*}
Clearly, there holds  $\underline{u}=\underline{\phi_3}=u$ on $\partial U_2$. By the definition of $\underline{u}$ and Property \ref{property1}, there also holds  
\begin{equation}\label{eq38}
\underline{u}\big|_{\partial\Omega}=u\big|_{\partial\Omega}=0.
\end{equation}
We claim that $\underline{u}$ is a viscosity subsolution to \eqref{equation} over $\Omega$. In what follows, the proof of this claim will be finished by proving that $\underline{u}$ is a viscosity subsolution to \eqref{equation} over $U_2$, over $\Omega\setminus \overline{U_2}$ and over $\partial U_2$ respectively. 

For any $x\in U_2$, 
 we have $\underline{u}(x)=\underline{\phi_3}(x)$. Then by \eqref{eq35} and \eqref{eq37}, there holds
\begin{equation*}
	\det D^2\underline{u}-f(x,\underline{u},\nabla \underline{u})\geqslant\frac{\underline{\zeta_0}}{4}>0,\ \ \forall x\in  U_2,
\end{equation*}
which shows that $\underline{u}$ is a classical subsolution to \eqref{equation} over $U_2$. Since  $\underline{\phi_2}\in C^2(U_0(x_0)\cap\Omega)$, then $\underline{\phi_2}\in C^2(U_2)$. Thus Lemma \ref{lemma:equiv} implies that $\underline{u}$ is a viscosity subsolution to \eqref{equation} over $U_2$.

For any $x\in \Omega\setminus \overline{U_2}$, we have $\underline{u}=u$. In \textbf{Step 1}, we have proved
that $u$ is a viscosity subsolution to \eqref{equation} over $\Omega$ and hence over $\Omega\setminus \overline{U_2}$. As a result, $\underline{u}$ is viscosity subsolution to \eqref{equation} over $\Omega\setminus \overline{U_2}$.

It remains to show that $\underline{u}$ is viscosity subsolution to \eqref{equation} over $\partial U_2$. Since \eqref{eq37} gives $U_2\subset\subset U_0(x_0)\cap \Omega$, it follows that $\overline{\partial U_2}=\partial U_2\subset \overline{U_2}\subset U_0(x_0)\cap\Omega$, then 
we also have $\partial U_2\subset\subset U_0(x_0)\cap \Omega$. For simplicity of consideration, 
it is sufficient 
 to  show that  $\underline{u}\big|_{U_0(x_0)\cap \Omega}$  which in fact can be still denoted by $\underline{u}$ as 
\begin{equation*}
	\underline{u}=\begin{cases}
		\underline{\phi_3}\ \ &\text{ if }x\in U_2\\
		u,\ \ &\text{ if }x\in(U_0(x_0)\cap\Omega)\setminus U_2
	\end{cases}
\end{equation*}
is viscosity subsolution to \eqref{equation} over $\partial U_2$. 
By the definition of $U_2$, it is easy to find that 
\[\underline{u}=\max\{\underline{\phi_3},u\},\ \ \forall x\in U_0(x_0)\cap\Omega.\]
Then for any point $y_0\in\partial U_2\subset\subset U_0(x_0)\cap \Omega$, any open neighborhood $U(y_0)\subset U_0(x_0)$ and any convex function $\phi\in C^2(U(y_0)\cap\Omega)$ satisfying 
\[(\phi-\underline{u})(y)\geqslant(\phi-\underline{u})(y_0),\ \ \forall y\in U(y_0)\cap\Omega,\]
there holds 
\[\underline{u}(y)=\max\{\underline{\phi_3}(y),u(y)\}\geqslant u(y),\ \ \forall y\in U(y_0)\cap\Omega.\] 
Since $y_0\in\partial U_2$ leads us to $\underline{u}(y_0)=\underline{\phi_3}(y_0)=u(y_0)$, then we obtain 
\[(\phi-u)(y)\geqslant(\phi-u)(y_0),\ \ \forall y\in U(y_0)\cap\Omega.\]
Consequently, based on the \textbf{Step 1} that  $u$ is  a viscosity subsolution to \eqref{equation} over $\Omega$ and hence over $\partial U_2$, together with Definition \ref{def:vis} (i), we deduce that 
\[\det D^2\phi(y_0)\geqslant f(y_0,u(y_0),\nabla \phi(y_0)).\]
Substituting the fact $\underline{u}(y_0)=u(y_0)$ into this inequality immediately gives rise to 
\[\det D^2\phi(y_0)\geqslant f(y_0,\underline{u}(y_0),\nabla \phi(y_0)).\]
Using Definition \ref{def:vis} (i) again, we finally conclude that 
$\underline{u}$ is  a viscosity subsolution to \eqref{equation} over $\partial U_2$.

Summarizing the above analysis, we have proved the claim that 
 $\underline{u}$ is  a viscosity subsolution to \eqref{equation} over $\Omega$.

\smallskip

\noindent
\textbf{Step 2-e}: Leading to  contradiction. 

From \eqref{eq38} and the claim proved in \textbf{Step 2-d}, it is clear that $\underline{u}\in S$. Hence, by the definition of $u$, we have 
\begin{equation*} 
u(x_0)=\sup_{v\in S}v(x_0)\geqslant\underline{u}(x_0).
\end{equation*}
By the definitions of $\underline{\phi_3}$ and $\underline{h_0}$, we note that 
\[\underline{\phi_3}(x_0)=\underline{\phi_2}(x_0)+h_0=u(x_0)+\underline{h_0}>u(x_0),\]
which together with the definition of $U_2$ forces $x_0\in U_2$. Using the definition of $\underline{u}$, we then obtain 
\begin{equation*} 
\underline{u}(x_0)=\underline{\phi_3}(x_0).
\end{equation*}
Putting the above three inequalities together now gives
\[u(x_0)\geqslant\underline{u}(x_0)=\underline{\phi_3}(x_0)>u(x_0).\]
This is impossible.

Now we can summarize that $u$ constructed in  \eqref{defineu} is a viscosity supersolution to \eqref{equation} over $\Omega$, which completes the \textbf{Step 2}.

Summing up \textbf{Step 1} and \textbf{Step 2}, we conclude that $u$ constructed in  \eqref{defineu} is a viscosity solution to \eqref{equation} over $\Omega$. By virtue of Property \ref{property1}, we further obtain that $u$  constructed in  \eqref{defineu}  is a viscosity solution to the problem \eqref{equation}-\eqref{bcondition} over $\Omega$. 
We have thus completed  the proof of Theorem \ref{thm1}.

\section{Interior regularity of viscosity solution}\label{Sec:IR}
In this section, we give the proof of   Theorem \ref{thm2}.  
From now on, let $u$ be a 
 viscosity solution to the problem  \eqref{equation}-\eqref{bcondition} over $\Omega$. We remark here that we do not require   $u$ to be defined by \eqref{defineu} in this section (though  the uniqueness of $u$ will be proved in the next section). 
Then by Definition \ref{def:vis} and Lemma \ref{lemma1}, it is clear that $u$ also satisfies all the four properties in Section \ref{Sec:u}. 
We are now ready to  show (i) and  (ii) respectively.

\subsection*{Proof of (i):}

Using Lemma \ref{lemma1} (i) and  the auxiliary function in Lemma \ref{lemma2}, we have $u\geqslant W$ on $\overline{\Omega}$. 
By  \eqref{determine W} and \eqref{r and xn}, we then derive 
\begin{equation*}
\min_{x\in\overline{\Omega}}u(x)\geqslant \min_{x\in\overline{\Omega}}W(x)=\min_{x\in\overline{\Omega}}\big(-M_0x_n^{\lambda_0}\cdot(N_0^2l^2-r^2)^{\frac{1}{2}}\big)\geqslant -M_0N_0 l^{\lambda_0+1}.
\end{equation*}
Together with Property \ref{property1} and Property \ref{property4}, this yields
\begin{equation}\label{eq40}
-M_0N_0 l^{\lambda_0+1}\leqslant u(x)\leqslant 0,\ \ \forall x\in\overline{\Omega}. 
\end{equation}

Let us fix an arbitrary set  $\Omega_0\subset\subset\Omega$. By Property \ref{property3} and Property \ref{property4}, there exists  a constant $\eta_0\in (0,M_0N_0l^{\lambda_0+1}]$ such that 
\begin{equation*}
\max_{x\in\overline{\Omega_0}}u(x)=-\eta_0<0.
\end{equation*}
Combined with \eqref{eq40}, this leads us to 
\begin{equation}\label{eq41}
-M_0N_0 l^{\lambda_0+1}\leqslant u(x)\leqslant -\eta_0,\ \ \forall x\in\overline{\Omega_0}. 
\end{equation}

For any $x_0\in\Omega$, let us define the subdifferential of $u$ at $x_0$ as 
\[\nabla u(x_0)=\{q\in\mathbb{R}^n:\, u(x)\geqslant u(x_0)+q\cdot(x-x_0),\, \forall x\in\overline{\Omega}\}.\]
Clearly (see Lemma A.20 in \cite{Figalli}), when $u$ is differentiable at $x_0$, it holds that $\nabla u(x_0)$ is a singleton which merely involves  the gradient of $u$ at $x_0$. 
For any $\Omega_0\subset\subset\Omega$, we now denote $$\nabla u(\overline{\Omega_0})=\{q\in\nabla u(x_0):\, x_0\in\overline{\Omega_0}\}.$$

 We claim that $\nabla u(\overline{\Omega_0})$ is a bounded set. In fact, by definition, for any $q\in\nabla u(\overline{\Omega_0})$, there exists a point $x_0\in\overline{\Omega_0}$ such that $$u(x)\geqslant u(x_0)+q\cdot (x-x_0),\ \ \forall x\in\overline{\Omega}.$$
 We observe that there exists a point $y_0\in\partial\Omega$ such that $(y_0-x_0)\parallel q$ with the same direction.
It follows that 
 \begin{equation*}
 0=u(y_0)\geqslant u(x_0)+q\cdot(y_0-x_0)=u(x_0)+|q|\cdot|y_0-x_0|.
 \end{equation*}
Then by \eqref{eq41}, we infer 
\begin{equation*}
|q|\cdot|y_0-x_0|\leqslant -u(x_0)=|u(x_0)|\leqslant M_0N_0l^{\lambda_0+1}.
\end{equation*}
Since $|y_0-x_0|\geqslant\operatorname{dist}(\overline{\Omega_0},\partial\Omega)$, we have 
\[|q|\leqslant\frac{M_0N_0l^{\lambda_0+1}}{\operatorname{dist}(\overline{\Omega_0},\partial\Omega)}=:R_0,\]
which proves that
$\nabla u(\overline{\Omega_0})\subset B_{R_0}(0)$ is a bounded set.

Now we denote 
\begin{equation*}
K=\overline{\Omega_0}\times[-M_0N_0l^{\lambda_0+1},-\eta_0]\times\overline{B_{R_0}(0)}.
\end{equation*}
From the above analysis, we have proved that 
\begin{equation}\label{eq49}
(x,u,\nabla u)\in K.
\end{equation}
It is also clear that $K$ is compact, and the structure condition  $(f_1)$ shows 
\begin{equation*}
f(x,z,q)\in C(K). 
\end{equation*}
By virtue of the structure condition $(f_3)$, there holds $f(x,z,q)>0$ for any $(x,z,q)\in K$. Therefore, we have 
\begin{align*}
&\Lambda_1:=\min_{(x,z,q)\in K}f(x,z,q)>0,\\
&\Lambda_2:=\max_{(x,z,q)\in K}f(x,z,q)<+\infty.
\end{align*}  
By the above arguments, we obtain that 
\begin{equation}\label{eq42}
\Lambda_1\leqslant f(x,u,\nabla u)\leqslant\Lambda_2,\ \ \forall x\in\overline{\Omega_0}.
\end{equation}

Consider the set  $G_t=\{x\in\Omega:\, u(x)<t\}$, where $t\leqslant 0$. We note that $u\big|_{\partial G_t}=t$. It is also trivial to see that for any $t_1, t_2\in\left(\min_{x\in\overline{\Omega}}u(x),0\right)$ satisfying $ t_1<t_2$, we have 
\[G_{t_1}\subset\subset G_{t_2}\subset\subset\Omega,\ \ \lim_{t\to 0^-}G_t=\bigcup_{t<0}G_t=\Omega. \]
Thus there exists $t_0\in\left(\min_{x\in\overline{\Omega}}u(x),0\right)$ such that 
\begin{equation}\label{eq45}
\Omega_0\subset\subset G_{t_0}\subset\subset\Omega.
\end{equation}
Based on this fact, similar to \eqref{eq42}, we can also show that there exists two constants $\Lambda_1',\Lambda_2'>0$ such that
\begin{equation}\label{eq54}
\Lambda_1'\leqslant f(x,u,\nabla u)\leqslant\Lambda_2',\ \ 
\forall x\in\overline{G_{t_0}}.
\end{equation}  
In consequence, there hold 
\begin{equation}\label{eq43}
\Lambda_1'\leqslant \det D^2u\leqslant\Lambda_2',\ \ 
\forall x\in\overline{G_{t_0}},
\end{equation}
and 
\begin{equation}\label{eq44}
u\big|_{\partial G_{t_0}}=t_0<0.
\end{equation}

By \eqref{eq43}-\eqref{eq44} and the Caffarelli's strict convexity result of convex solutions (for the case when the right-hand-side is bounded away from zero and infinity and moreover the boundary data is $C^{1,\alpha}$ with $\alpha>1-\frac{2}{n}$, see Corollary 4.11 in \cite{Figalli} for example), it follows that $u$ is strictly convex in $G_{t_0}$ and hence also in $\Omega_0$. 

Now we 
take a sufficiently small constant $\varepsilon_0>0$ such that 
\begin{equation}\label{eq46}
\Omega_0\subset\subset G_{t_0-\varepsilon_0}\subset\subset G_{t_0}.
\end{equation}
Then by  the Caffarelli's interior $C^{1,\alpha}$ regularity 
 result of strictly convex solutions (for the case when the right-hand-side is bounded away from zero and infinity, see Corollary 4.21 in \cite{Figalli} for example), there exists some constant $\delta=\delta(n,\Lambda_1',\Lambda_2')>0$ such that $$u\in C^{1,\delta}(\overline{G_{t_0-\varepsilon_0}}).$$ 
 As a result, $\nabla u$ exists everywhere over $\overline{G_{t_0-\varepsilon_0}}$ and $$\nabla u\in C^{0,\delta}(\overline{G_{t_0-\varepsilon_0}}),$$
which forces that $\nabla u(\overline{G_{t_0-\varepsilon_0}})$ is a bounded set.
We now apply  the arguments in the proof of \eqref{eq49} again to derive that there exist two constants $\eta_1\in (0,M_0N_0l^{\lambda_0+1}]$ and $R_1\in (0,+\infty)$ such that  for any $x\in\overline{G_{t_0-\varepsilon_0}}$, we have
\begin{equation}\label{eq50}
(x,u,\nabla u)\in \overline{G_{t_0-\varepsilon_0}}\times[-M_0N_0l^{\lambda_0+1},-\eta_1]\times\overline{B_{R_1}(0)}. 
\end{equation}

By \eqref{eq50} and the structure condition $(f_1)$, we see that 
\begin{equation}\label{eq52}
	f(x,u,\nabla u)\in C\big(\overline{G_{t_0-\varepsilon_0}}\times[-M_0N_0l^{\lambda_0+1},-\eta_1]\times\overline{B_{R_1}(0)}\big). 
\end{equation}
According to \eqref{eq46}, \eqref{eq52} and the Caffarelli's interior $W^{2,p}$ regularity result of strictly convex solutions  (for the case when the right-hand-side is continuous, see Corollary 4.38 in \cite{Figalli} for example), it follows that for any $p\in (1,+\infty)$, we have  
$u\in W^{2,p}(\overline{\Omega_0})$ and hence 
there exists some constant $C_3=C_3(n,\Lambda_1,\Lambda_2,p)>0$ such that 
\begin{equation}\label{eq47}
\|u\|_{W^{2,p}(\overline{\Omega_0})}\leqslant C_3.
\end{equation}
For any $\mu\in(0,1)$, it is clear that there exists a sufficiently large constant  $p_0\in(n,+\infty)$ such that $1-\frac{n}{p_0}>\mu$. In particular, \eqref{eq47} still holds for $p=p_0$. Then by the  Sobolev imbedding theorem (see Theorem 4.12 (Part II) in \cite{Adams-Fournier} for example), we have $u\in W^{2,p_0}(\Omega_0)\hookrightarrow C^{1,\mu}(\overline{\Omega_0})$ and therefore there exists some constant  $C_4=C_4(n,p_0,\mu)$ such that 
\begin{equation}\label{eq48}
	\|u\|_{C^{1,\mu}(\overline{\Omega_0})}\leqslant C_4\|u\|_{W^{2,p_0}(\Omega_0)}\leqslant C_3C_4.
\end{equation}
Thus we can summarize from \eqref{eq47}-\eqref{eq48} that $u\in W^{2,p}(\overline{\Omega_0})$ for any $p\in (1,+\infty)$
as well as 
$u\in C^{1,\mu}(\overline{\Omega_0})$ for any  $\mu\in (0,1)$. This completes the proof of (i).

\subsection*{Proof of (ii):}
To show $u\in C^\infty(\Omega)$, 
it suffices to show that $u$ is $C^\infty$ at any point $x_0\in\Omega$. We observe that for any $x_0\in\Omega$,  there holds $x_0\notin\partial\Omega$ and then $\operatorname{dist}(x_0,\partial\Omega)>0$. In this way, we can always take $\Omega_0\subset\subset\Omega$ such that $x_0\in\Omega_0$. Therefore this problem reduces to showing that 
 $u\in C^\infty(\overline{\Omega_0})$ for any $\Omega_0\subset\subset\Omega$.

Now we fix an arbitrary set  $\Omega_0\subset\subset\Omega$. We also construct the set  $G_{t_0}$ as in (i). 
Since we have proved in  \eqref{eq45} that $G_{t_0}\subset\subset\Omega$, then we can directly apply the result   on $\Omega_0$ in (i) to $G_{t_0}$. It immediately follows that for any  $\mu\in (0,1)$, we have 
\begin{equation}\label{eq51}
u\in C^{1,\mu}(\overline{G_{t_0}}),
\end{equation}
which further implies that
 \begin{equation*}
 \nabla u\in C^{0,\mu}(\overline{G_{t_0}}).
 \end{equation*} 
 This also ensures that $\nabla u(\overline{G_{t_0}})$ is a bounded set. Thus we can 
follow   the arguments in the proof of \eqref{eq49} or \eqref{eq50} again to obtain that there exist two constants $\eta_2\in (0,M_0N_0l^{\lambda_0+1}]$ and $R_2\in (0,+\infty)$ such that  for any $x\in\overline{G_{t_0}}$, we have
 \begin{equation}\label{eq60}
 	(x,u,\nabla u)\in  \overline{G_{t_0}}\times[-M_0N_0l^{\lambda_0+1},-\eta_2]\times\overline{B_{R_2}(0)}.
 \end{equation}
Using  the  condition $f(x,z,q)\in C^\infty(\Omega\times (-\infty,0)\times \mathbb{R}^n)$, we then infer that the composition function $f(x,u(x),\nabla u(x))$ satisfies 
 \begin{equation}\label{eq53}
	f(x,u(x),\nabla u(x))\in C^{0,\mu}\big(\overline{G_{t_0}}\big).
\end{equation}
Since we have proved in  \eqref{eq45} that $\Omega_0\subset\subset G_{t_0}$, then by \eqref{eq54}, \eqref{eq53} and the Caffarelli's interior $C^{2,\alpha}$ regularity result of strictly convex solutions (for the case when the right-hand-side is H\"older continuous and also  bounded away from zero and infinity, see Corollary 4.43 in \cite{Figalli} for example), we obtain 
\begin{equation}\label{eq55}
u\in C^{2,\mu}(\overline{\Omega_0}).
\end{equation}

Up to now,  using \eqref{eq45}, \eqref{eq55} and  the  condition $f(x,z,q)\in C^\infty(\Omega\times (-\infty,0)\times \mathbb{R}^n)$, 
 by the same method as from \eqref{eq51} to \eqref{eq53},  we can also derive that the composition function $f(x,u(x),\nabla u(x))$ satisfies  
\[f(x,u(x),\nabla u(x))\in C^{1,\mu}(\overline{\Omega_0}).\]
Consequently, for any  direction $e\in\mathbb{S}^{n-1}$, we can differentiate \eqref{equation} in the direction $e$ to get 
\begin{equation}\label{eq56}
U^{ij}\partial_{ij}u_e=(f(x,u(x),\nabla u(x)))_e,
\end{equation}
where $U^{ij}=\det D^2u\cdot (D^2u)^{-1}$ is the	adjoint matrix of $D^2u$, and 
\begin{equation}\label{eq57} 
(f(x,u(x),\nabla u(x)))_e\in C^{0,\mu}(\overline{\Omega_0}).
\end{equation}
By virtue of \eqref{eq55} and \eqref{eq43}, we can 
also derive that there exist two  constants  $M_0',m_0'>0$ such that 
\[m_0'\mathrm{I}_n\leqslant D^2u\leqslant M_0'\mathrm{I}_n,\ \ \forall x\in \overline{\Omega_0}.\]
Using again \eqref{eq43} now gives rise to 
\[\frac{\Lambda_1'}{M_0'}\mathrm{I}_n\leqslant U^{ij}\leqslant \frac{\Lambda_2'}{m_0'}\mathrm{I}_n,\ \ \forall x\in \overline{\Omega_0}.\]
Hence the linearized equation \eqref{eq56} is uniformly elliptic. Therefore, in view of  \eqref{eq57}  and the Schauder  regularity result (of linear uniformly elliptic equations with H\"older coefficients, see Theorem A.39 in \cite{Figalli} for example), 
we deduce that 
\[u_e\in C^{2,\mu}(\overline{\Omega_0}).\]
Since the direction $e$ is arbitrary, it follows that 
\begin{equation}\label{eq59}
u\in C^{3,\mu}(\overline{\Omega_0}).
\end{equation}

 By induction, we can always repeat the above arguments from \eqref{eq55} to \eqref{eq59} to upgrade the regularity. Therefore we obtain that for any $m\in\mathbb{N}$, there holds
\[u\in C^{m,\mu}(\overline{\Omega_0}).\]
This shows
\[u\in C^\infty(\overline{\Omega_0}),\]
and thus completes the proof of (ii). 

Until now, the proof of Theorem \ref{thm2} has been finished. 

\section{Uniqueness of viscosity solution}\label{Sec:Unique}
In this section, we will prove  Theorem \ref{thm3}. 
Let $u$ and $\widehat{u}$ be two viscosity solutions to the problem \eqref{equation}-\eqref{bcondition} over $\Omega$. It is clear that $u\big|_{\partial\Omega}=\widehat{u}\big|_{\partial\Omega}=0$. We are going to show that $u=\widehat{u}$ in $\Omega$. 

\subsection*{Step 1} We show that $u$ and $\widehat{u}$ are two classical solutions to the problem \eqref{equation}-\eqref{bcondition} over $\Omega$.

Now we fix an arbitrary set  $\Omega_0\subset\subset\Omega$. We also construct the set  $G_{t_0}$ by \eqref{eq45}. 
As in the proof of Theorem \ref{thm2} (ii), 
by \eqref{eq51} and \eqref{eq60}, there hold $u\in C^{1,\mu}(\overline{G_{t_0}})$ and  for any $x\in\overline{G_{t_0}}$, we have
\begin{equation*} 
	(x,u,\nabla u)\in  \overline{G_{t_0}}\times[-M_0N_0l^{\lambda_0+1},-\eta_2]\times\overline{B_{R_2}(0)}.
\end{equation*}

By virtue of the structure condition $(f_1')$, we obtain that 
\begin{equation}\label{eq61}
	f(x,z,q)\in C^{0,\delta_0}\big(\overline{G_{t_0}}\times[-M_0N_0l^{\lambda_0+1},-\eta_2]\times\overline{B_{R_2}(0)}\big).
\end{equation}
For any $x,\widehat{x}\in \overline{G_{t_0}}$, we observe that 
\begin{align*}
&|x-\widehat{x}|=|x-\widehat{x}|^{1-\mu}|x-\widehat{x}|^\mu\leqslant l^{1-\mu}|x-\widehat{x}|^\mu,\\
&|u(x)-u(\widehat{x})|\leqslant\|\nabla u\|_{L^\infty(\overline{G_{t_0}})}|x-\widehat{x}|\leqslant\|u\|_{C^{1,\mu}(\overline{G_{t_0}})}l^{1-\mu}|x-\widehat{x}|^\mu,\\
&|\nabla u(x)-\nabla u(\widehat{x})|\leqslant\|\nabla u\|_{C^{0,\mu}(\overline{G_{t_0}}) } |x-\widehat{x}|^\mu,
\end{align*}
and therefore by \eqref{eq61}, we derive that 
\begin{align*}
&\ \ \ \ |f(x,u(x),\nabla u(x))-f(\widehat{x},u(\widehat{x}),\nabla u(\widehat{x}))|\\
&\leqslant C(t_0,M_0,N_0,\lambda_0,l,\eta_2,R_2)\big(|x-\widehat{x}|^2+|u(x)-u(\widehat{x})|^2+|\nabla u(x)-\nabla u(\widehat{x})|^2
\big)^{\frac{\delta_0}{2}}\\
&\leqslant C(t_0,M_0,N_0,\lambda_0,l,\eta_2,R_2,\mu)|x-\widehat{x}|^{\mu\delta_0}.
\end{align*}
Here  in the last step we have applied \eqref{eq48} of $\Omega_0$ to $G_{t_0}$ (since $G_{t_0}\subset\subset\Omega$ by  \eqref{eq45}).
As a result, 
it follows that
\begin{equation}\label{eq62}
	\big(f(x,u(x),\nabla u(x))\big)\in C^{0,\mu\delta_0}\big(\overline{G_{t_0}}\big).
\end{equation}

Since we have proved in  \eqref{eq45} that $\Omega_0\subset\subset G_{t_0}$, then by \eqref{eq54}, \eqref{eq62} and the Caffarelli's interior $C^{2,\alpha}$ regularity result as before, 
we also obtain 
\begin{equation*} 
	u\in C^{2,\mu\delta_0}(\overline{\Omega_0}).
\end{equation*}
Since $\Omega_0$ is arbitrary, we infer
\begin{equation*}
	u\in C^{2}(\Omega).
\end{equation*}
Thus 
we conclude that $u$ is a classical solution to the problem \eqref{equation}-\eqref{bcondition} over $\Omega$. 
Similarly, we can also show that $\widehat{u}\in C^{2}(\Omega)$ and hence $\widehat{u}$ is a classical solution to the problem \eqref{equation}-\eqref{bcondition} over $\Omega$.

\subsection*{Step 2} We show that $u=\widehat{u}$ in $\Omega$.

We first claim that
\begin{equation}\label{eq64}
u-\widehat{u}\geqslant 0\  \text{ in }\Omega.
\end{equation} 
The proof of this claim is by contradiction. We note that $(u-\widehat{u})\big|_{\partial\Omega}=0$. 
Suppose   there exists a point $\underline{x}\in\Omega$ such that 
\[\min_{x\in\overline{\Omega}}(u-\widehat{u})=(u-\widehat{u})(\underline{x})<0.\]
Then we have
\[(u-\widehat{u})(\underline{x})<0,\ \ \nabla(u-\widehat{u})(\underline{x})=0,\ \ \nabla^2(u-\widehat{u})(\underline{x})\geqslant0,\]
i.e.
\[  u(\underline{x})<\widehat{u}(\underline{x}), \ \ \nabla u(\underline{x})=\nabla\widehat{u}(\underline{x}),\ \ \nabla^2u(\underline{x})\geqslant\nabla^2\widehat{u}(\underline{x}).\]
Together with \textbf{Step 1} and Definition \ref{def:class}, this yields 
\begin{equation*}
	f(\underline{x},u(\underline{x}),\nabla u(\underline{x}))=\det D^2u(\underline{x})\geqslant\det D^2\widehat{u}(\underline{x})=f(\underline{x},\widehat{u}(\underline{x}),\nabla\widehat{u}(\underline{x}))=f(\underline{x},\widehat{u}(\underline{x}),\nabla u(\underline{x})),
\end{equation*}
which contradicts the fact $u(\underline{x})<\widehat{u}(\underline{x})$ by virtue of the structure condition $(f_2')$. This proves the claim.


Similarly, we can also obtain \begin{equation}\label{eq65}
	u-\widehat{u}\leqslant 0\  \text{ in }\Omega.
\end{equation} 
Combining \eqref{eq64} and \eqref{eq65}, we get $u-\widehat{u}=0$ in $\Omega$, i.e. $u=\widehat{u}$ in $\Omega$.

The proof of Theorem \ref{thm3} is now complete.

\subsection*{Acknowledgement:}
This work was supported by the National Natural Science Foundation of China. 

\subsection*{Disclosure Statement:} There are no relevant financial or non-financial competing interests to report.

\end{document}